\newtheorem{thm}{Theorem}[section]
\newtheorem{prop}[thm]{Proposition}
\newtheorem{lem}[thm]{Lemma}
\theoremstyle{definition}
\numberwithin{equation}{section}
\newtheorem{rem}[thm]{Remark}
\newtheorem{defn}[thm]{Definition}
\def\bbP{\mathbb{P}}
\def\bbQ{\mathbb{Q}}
\def\bbT{\mathbb{T}}
\def\bbZ{\mathbb{Z}}
\def\bfb{\mathbf{b}}
\def\bfd{\mathbf{d}}
\def\bfe{\mathbf{e}}
\def\bfp{\mathbf{p}}
\def\bfx{\mathbf{x}}
\def\bfy{\mathbf{y}}
\def\bfz{\mathbf{z}}
\def\calA{\mathcal{A}}
\def\calF{\mathcal{F}}
\def\ve{\varepsilon}
\begin{document}
\bibliographystyle{amsalpha}

\title[Structure of seeds in generalized cluster algebras]
{Structure of seeds in  generalized cluster algebras}

\author{Tomoki Nakanishi}
\address{\noindent Graduate School of Mathematics, Nagoya University, 
Chikusa-ku, Nagoya,
464-8604, Japan}
\email{nakanisi@math.nagoya-u.ac.jp}

\subjclass[2010]{13F60}

\date{}
\maketitle

\begin{abstract}
We study generalized cluster algebras introduced by
Chekhov and Shapiro.
When the coefficients satisfy the normalization and quasi-reciprocity conditions,
one can naturally extend the structure theory of seeds in
the ordinary cluster algebras by Fomin and Zelevinsky
to generalized cluster algebras.
As the main result,
we obtain formulas expressing cluster variables and coefficients in terms of
$c$-vectors, $g$-vectors, and $F$-polynomials.
\end{abstract}

\section{Introduction}
In \cite{Chekhov11} Chekhov and Shapiro introduced
{\em generalized cluster algebras},
which naturally generalize the ordinary cluster algebras by
Fomin and Zelevinsky \cite{Fomin02}.
In generalized cluster algebras, the celebrated {\em binomial\/} exchange relation
for cluster variables of ordinary cluster algebras
\begin{align}
\begin{split}
x'_k x_k &=
p_k^- \prod_{j=1}^n x_j^{[-b_{jk}]_+ }
+
p_k^+ \prod_{j=1}^n  x_j^{[b_{jk}]_+ }
\\
&=
\Biggl(\prod_{j=1}^n  x_j^{[-b_{jk}]_+ }\Biggr)
(p_k^{-} + p_k^+ w_k),
\quad
w_k=\prod_{j=1}^n  x_j^{b_{jk}}
\end{split}
\end{align}
is replaced by the {\em polynomial\/} one of arbitrary degree $d_k\geq 1$,
\begin{align}
\label{eq:rel1}
x'_k x_k =
\Biggl(\prod_{j=1}^n  x_j^{[-\beta_{jk}]_+ }\Biggr)^{d_k}
\sum_{s=0}^{d_k}
 p_{k,s} w_k^s,
\quad
w_k=\prod_{j=1}^n  x_j^{\beta_{jk}},
\end{align}
where $\beta_{jk}=b_{jk}/d_k$  are assumed to be integers
and the coefficients $p_{k,s}$ should be also mutated appropriately.
This generalization is expected to be natural, since it originates in  the transformations preserving the associated Poisson bracket
 \cite{Gekhtman05}.
 In fact, it was shown in \cite{Chekhov11} that 
 the generalized cluster algebras have the {\em Laurent property},
 which is regarded as the most characteristic feature of the ordinary cluster algebras.
 Furthermore, it was also shown   in \cite{Chekhov11} that  the {\em finite-type classification\/} of the generalized cluster algebras reduces to the one for the ordinary case.
 These results already imply that, despite the apparent complexity of their exchange relations
 \eqref{eq:rel1}, generalized cluster algebras may be well-controlled like the ordinary ones.
 See also \cite{Rupel13} for the result on greedy bases in rank 2 generalized cluster algebras.
 
 Besides  the above cluster-algebra-theoretic interest,
 the generalized cluster algebra structure  naturally appears for the Teichm\"uller spaces of
 Riemann surfaces with orbifold points \cite{Chekhov11}.
 More recently, it also appears in representation theory of quantum affine algebras
 \cite{Gleitz14}
 and also in the study of WKB analysis \cite{Iwaki14b}.
In view of these developments, and also for  potentially more versatility of polynomial
exchange relations than the binomial one, it is not only natural but also necessary to develop
a structure theory of seeds in generalized cluster algebras
which is parallel to the one for the ordinary cluster algebras by \cite{Fomin07}.
The core notion of the  theory of \cite{Fomin07} is 
a cluster pattern with {\em principal coefficients},
from which
 other important notions
such as {\em $c$-vectors}, {\em $g$-vectors}, and {\em $F$-polynomials\/} are also induced.
Then, the main result of \cite{Fomin07} is the formulas expressing
cluster variables and coefficients in terms of $c$-vectors,
$g$-vectors, and $F$-polynomials.
These formulas are  especially important in view of the categorification
of  cluster algebras by (generalized) cluster categories (see
\cite{Plamondon10b} and reference therein).

The purpose of this paper is
to provide  results parallel to the above ones
for generalized cluster algebras.
To be more precise,
we consider a class of generalized cluster algebras
whose coefficients
satisfy the {\em normalization condition\/} and 
what we call the {\em quasi-reciprocity condition}.
For this class of generalized cluster algebras,
we introduce the notions of
a cluster pattern with principal coefficients,
$c$-vectors, $g$-vectors, and $F$-polynomials.
Then, as a main result, we obtain
the formulas expressing cluster variables and coefficients in terms of
$c$-vectors, $g$-vectors, and $F$-polynomials,
which are parallel to the ones in \cite{Fomin07}.
To summarize, 
{\em generalized cluster algebras preserve
essentially every feature of the ordinary ones},
and this is the main message of the paper.

\bigskip
{\em Acknowledgements.}
We thank Anne-Sophie Gleitz, Kohei Iwaki, and Michael Shapiro
for useful discussions and communications.

\section{Generalized cluster algebras}

In this section we recall basic notions of generalized cluster algebras
following \cite{Chekhov11}.
However, we slightly modify the setting of  \cite{Chekhov11}
to match   the setting of (ordinary) cluster algebras  in \cite{Fomin07}.

\subsection{Generalized seed mutations}
Throughout the paper we always assume that any matrix is an {\em integer\/} matrix.

Recall that a matrix $B=(b_{ij})_{i.j=1}^n$ is said to be {\em skew-symmetrizable\/}
if there is an $n$-tuple of positive integers $\bfd=(d_1,\dots,d_n)$
such that $d_ib_{ij}=-d_jb_{ji}$.

We start by fixing a semifield $\bbP$, whose addition is denoted by $\oplus$.
Let $\bbZ\bbP$ be the group ring of $\bbP$,
and let $\bbQ\bbP$ be the field of the fractions of $\bbZ\bbP$.
Let $w_1,\dots,w_n$ be any algebraic independent variables,
and let $\calF=\bbQ\bbP(w)$ be the field of the rational functions
in $w=(w_1$,\dots,
$w_n)$ with coefficients in $\bbQ\bbP$.

The following definition is the usual one \cite{Fomin07}.

\begin{defn} A (labeled) seed in $\bbP$ is a triplet $(\bfx,\bfy,B)$
such that
\begin{itemize}
\item $B$ is a skew-symmetrizable matrix, called an {\em exchange matrix},
\item $\bfx=(x_1,\dots,x_n)$ is an $n$-tuple of
elements in $\calF$, called  {\em cluster variables\/} or {\em $x$-variables},
\item $\bfy=(y_1,\dots,y_n)$ is an $n$-tuple of
elements in $\bbP$,
 called  {\em coefficients\/} or {\em $y$-variables}.
\end{itemize}
\end{defn}

Next we introduce a pair $(\bfd,\bfz)$ of  data  for generalized seed mutations
we consider.
Firstly, $\bfd=(d_1,\dots,d_n)$ is an $n$-tuple of positive integers,
and we call these integers the {\em mutation degrees}.
We stress that we do {\em not\/} impose the skew-symmetric condition
$d_ib_{ij}=-d_jb_{ji}$.
Secondly, $\bfz$ is a family of elements in $\bbP$,
\begin{align}
\bfz=(z_{i,s})_{ i=1,\dots,n; s=1,\dots,d_i-1}
\end{align}
satisfying the following condition:
\par
(reciprocity)
\begin{align}
\label{eq:p1}
z_{i,s}=z_{i,d_i-s}
\quad (s=1,\dots,d_i-1).
\end{align}
We call them 
 the {\em frozen coefficients},
since they are not ``mutated",
or simply the {\em $z$-variables}.
We also set
\begin{align}
z_{i,0}=z_{i,d_i}=1.
\end{align}

For $\bfd=(1,\dots,1)$, $\bfz$ is empty,
and  it reduces to the ordinary case. 
(Here and below, ``ordinary" means the case of ordinary cluster algebras).

\begin{defn}
\label{defn:mut1} Let $(\bfd,\bfz)$ be given as above.
For any seed $(\bfx,\bfy,B)$ in $\bbP$ and $k=1,\dots,n$,
 the {\em   $(\bfd,\bfz)$-mutation of $(\bfx,\bfy,B)$ at $k$\/}
 is another seed $(\bfx',\bfy',B')=\mu_k(\bfx,\bfy,B)$ in $\bbP$
 defined by the following rule:
 \begin{align}
 \label{eq:bmut1}
b'_{ij}&=
\begin{cases}
-b_{ij}& \mbox{$i=k$ or $j=k$}\\
b_{ij}+d_k
\left([- b_{ik}]_+ b_{kj} + b_{ik}[ b_{kj}]_+\right)
& \mbox{$i,j\neq k$,}\\
\end{cases}
\\
\label{eq:ymut1}
{y}'_i&=
\begin{cases}
\displaystyle
{y}_k^{-1}
 & i=k\\
 \displaystyle
{y}_i
\Biggl(
{y}_k^{[\ve {b}_{ki}]_+} 
\Biggr)^{d_k}
\Biggl(
\bigoplus_{s=0}^{d_k}
z_{k,s} {y}_k^{\ve s}
\Biggr)^{-{b}_{ki}}
& i\neq k,\\
\end{cases}
\\
\label{eq:xmut1}
x'_i&=
\begin{cases}
\displaystyle
x_k^{-1}
\Biggl(\prod_{j=1}^n 
x_j^{[-\ve {b}_{jk}]_+}
\Biggr)^{d_k}
\frac{\displaystyle
\sum_{s=0}^{d_k}
z_{k,s} \hat{y}_k^{\ve s}
}
{\displaystyle
\bigoplus_{s=0}^{d_k}
z_{k,s} {y}_k^{\ve s}
}
 & i=k\\
x_i& i\neq k,\\
\end{cases}
\end{align}
where $\ve=\pm 1$, $[a]_+=\max(a,0)$, and we set
\begin{align}
\label{eq:yhat1}
\hat{y}_i
&= y_i \prod_{j=1}^n
x_j^{{b}_{ji}}.
\end{align}
 When the data $(\bfd,\bfz)$ is clearly assumed,
 we may drop the prefix and simply call it the {\em (generalized) mutation}.

\end{defn}

 Let $D=(d_i\delta_{ij})_{i,j=1}^n$ be the diagonal matrix with
diagonal entries  $\bfd$.
It is important to note that
the mutation \eqref{eq:bmut1} is equivalent to the {\em ordinary\/} mutation
of exchange matrices between $DB$ and $DB'$, and also between $BD$ and $B'D$
in \cite{Fomin07}.

The following properties are easy to confirm:
\begin{itemize}
\item The formulas \eqref{eq:ymut1} and \eqref{eq:xmut1} are
{\em independent\/} of the choice of the sign $\ve$ due to
\eqref{eq:p1}.
\item The mutation $\mu_k$ is involutive,
i.e., $\mu_k(\mu_k(\bfx,\bfy,B))=(\bfx,\bfy,B)$.
\end{itemize}

\begin{rem}
\label{rem:CS1}
Here we transposed every matrix in  \cite{Chekhov11}.
Also the matrix $B$ therein is the matrix $DB^{T}$ here,
and  $\beta_{ij}$ therein is $b_{ji}$ here.
\end{rem}

\begin{rem}
In this paper we do not use the freedom of the choice of sign $\ve$
in \eqref{eq:ymut1} and \eqref{eq:xmut1},
and it can be  safely set as $\ve=1$ throughout.
Nevertheless, we keep it in all formulas involved since
it is useful for several purposes, for example,
to consider {\em signed mutations}
which appeared in \cite{Iwaki14b}.
\end{rem}
\begin{prop} Under the mutation $\mu_k$, the $\hat{y}$-variables \eqref{eq:yhat1} mutate 
in the same way as the $y$-variables, namely,
\begin{align}
\label{eq:yhatmut1}
\hat{y}'_i&=
\begin{cases}
\displaystyle
\hat{y}_k^{-1}
 & i=k\\
 \displaystyle
\hat{y}_i
\Biggl(
\hat{y}_k^{[\ve {b}_{ki}]_+} 
\Biggl)^{d_k}
\Biggl(
\sum_{s=0}^{d_k}
z_{k,s} \hat{y}_k^{\ve s}
\Biggr)^{-{b}_{ki}}
& i\neq k.\\
\end{cases}
\end{align}
\end{prop}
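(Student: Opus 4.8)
The plan is to verify the claimed formula \eqref{eq:yhatmut1} by direct substitution, using the definition $\hat{y}_i = y_i \prod_{j} x_j^{b_{ji}}$ together with the mutation rules \eqref{eq:bmut1}, \eqref{eq:ymut1}, \eqref{eq:xmut1} for $B$, $\bfy$, and $\bfx$. Since the case $i=k$ is immediate ($\hat y'_k = y_k^{-1}\prod_j x_j^{-b_{jk}}$ because $b'_{jk}=-b_{jk}$ and $x'_j = x_j$ for $j\neq k$ while the $x'_k$ factor contributes the remaining reciprocal piece), the content is entirely in the case $i\neq k$. The key observation to exploit is the one highlighted in the text after Definition~\ref{defn:mut1}: the exchange-matrix mutation \eqref{eq:bmut1} is the ordinary Fomin--Zelevinsky mutation applied to $DB$. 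This means that for the matrix entries $\tilde b_{ji} := d_j b_{ji}$ (the entries of $DB$), the usual identity $\tilde b'_{ji} = \tilde b_{ji} + d_k\bigl([-\tilde b_{jk}]_+ b_{ki} + \tilde b_{jk}[b_{ki}]_+/d_k\bigr)$ — more precisely the standard form $\tilde b'_{ji}=\tilde b_{ji}+[\tilde b_{jk}]_+[\tilde b'_{ki}]_+ - [-\tilde b_{jk}]_+[-\tilde b'_{ki}]_+$ — holds. In fact the cleanest route is to recall from \cite{Fomin07} the identity governing $\hat y$ in the ordinary case and adapt the bookkeeping.

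Concretely, I would write $\hat y'_i = y'_i \prod_j (x'_j)^{b'_{ji}}$ and substitute. Only $x'_k$ differs from $x_k$, so $\prod_j (x'_j)^{b'_{ji}} = \bigl(\prod_{j\neq k} x_j^{b'_{ji}}\bigr)(x'_k)^{b'_{ki}} = \bigl(\prod_{j\neq k} x_j^{b'_{ji}}\bigr)(x'_k)^{-b_{ki}}$. Then substitute the formula \eqref{eq:xmut1} for $x'_k$ (with a fixed choice, say $\ve=1$), and substitute \eqref{eq:ymut1} for $y'_i$. The denominator $\bigoplus_s z_{k,s} y_k^s$ appearing in $x'_k$ raised to $-b_{ki}$ will cancel exactly against the factor $(\bigoplus_s z_{k,s}y_k^s)^{-b_{ki}}$ coming from $y'_i$ — this is the mechanism by which the $\bbP$-semifield sum disappears and is replaced by the $\bbZ\bbP$-sum $\sum_s z_{k,s}\hat y_k^s$. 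What remains is to collect the monomial in the $x_j$'s and in $y_k$, and check it reassembles into $\hat y_i (\hat y_k^{[b_{ki}]_+})^{d_k}(\sum_s z_{k,s}\hat y_k^s)^{-b_{ki}}$, using $\hat y_k = y_k\prod_j x_j^{b_{jk}}$ and $\hat y_i = y_i\prod_j x_j^{b_{ji}}$.

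The main obstacle is the exponent bookkeeping in the $x_j$ monomials: one must show that the combined exponent of each $x_j$ ($j\neq k$) arising from (a) $y'_i$ contributing nothing in $x$'s but $\prod_{j\neq k}x_j^{b'_{ji}}$ contributing $b'_{ji}$, (b) the $d_k([-b_{jk}]_+)$ power of $x_j$ inside $(x'_k)^{-b_{ki}}$, and (c) the implicit $x_j$-dependence hidden inside $\hat y_k^s$ in the numerator $\sum_s z_{k,s}\hat y_k^s$ — matches precisely the exponent demanded by the right-hand side, namely $b_{ji}$ (from $\hat y_i$) plus $d_k[b_{ki}]_+ b_{jk}$ (from $(\hat y_k^{[b_{ki}]_+})^{d_k}$) plus $-b_{ki}$ copies of $s\,b_{jk}$ absorbed into $\hat y_k^s$. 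The identity that makes this work is exactly the matrix-mutation rule \eqref{eq:bmut1}, i.e. $b'_{ji}-b_{ji}=d_k([-b_{jk}]_+ b_{ki}+b_{jk}[b_{ki}]_+)$, combined with the elementary splitting $[-b_{jk}]_+ b_{ki} + b_{jk}[b_{ki}]_+ + b_{jk}b_{ki}\cdot(\text{correction}) $ — i.e. the familiar identity $[a]_+ b + a[-b]_+ = [a]_+[-b]_+ - [-a]_+[b]_+ + ab$ used to reconcile the two sign conventions. A secondary, purely formal point to dispatch is sign-independence of the resulting expression in $\ve$, which follows from the reciprocity condition \eqref{eq:p1} exactly as already noted for \eqref{eq:ymut1}; I would remark that it therefore suffices to do the computation at $\ve=1$. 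Once the exponent identity is in hand, the proposition follows by a one-line comparison of the two sides.
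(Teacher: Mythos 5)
Your proposal is correct and is essentially the paper's own proof: the paper simply cites ``the technique in \cite[Proposition 3.9]{Fomin07}'', which is exactly the direct substitution you carry out, with the semifield denominator of $x'_k$ cancelling against the corresponding factor in $y'_i$ and the exponent bookkeeping reducing to the mutation rule \eqref{eq:bmut1}. The only cosmetic point is that for $i=k$ the factor $(x'_k)^{b'_{kk}}$ is trivial since $b_{kk}=0$, so $\hat y'_k=y_k^{-1}\prod_j x_j^{-b_{jk}}=\hat y_k^{-1}$ with no contribution from $x'_k$ at all.
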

\begin{proof}
This is proved using the  technique in \cite[Proposition 3.9]{Fomin07}.
\end{proof}

 Next let us explain how our setting is regarded as a specialization of
the setting of \cite{Chekhov11}.
In \cite{Chekhov11} a seed in $\bbP$ is defined as a triplet
$(\bfx,\bfp,B)$, where $\bfx$ and $B$ are the same ones here
(up to the identification of $B$ as in Remark \ref{rem:CS1}),
but $\bfp$ is a family of elements in $\bbP$,
\begin{align}
\label{eq:p1a}
\bfp=(p_{i,s})_{i=1,\dots,n; s=0,\dots,d_i}.
\end{align}
Then, for the mutation
$(\bfx',\bfp',B')=\mu_k(\bfx,\bfp,B)$,
 the following  formulas replace \eqref{eq:ymut1}
and \eqref{eq:xmut1}:
 \begin{align}
\label{eq:pmut1}
\begin{split}
p'_{k,s}&=p_{k,d_k-s},\\
\frac{p'_{i,s}}{p'_{i,0}}&=
\begin{cases}
 \displaystyle
\frac{p_{i,s}}{p_{i,0}}
(p_{k,d_k}^{ {b}_{ki}} )^s
& i\neq k,\ b_{ki}\geq 0\\
 \displaystyle
\frac{p_{i,s}}{p_{i,0}}
(p_{k,0}^{ {b}_{ki}} )^s
& i\neq k,\ b_{ki}\leq 0,\\
\end{cases}
\end{split}
\\
\label{eq:xmut2}
x'_i&=
\begin{cases}
\displaystyle
x_k^{-1}
\Biggl(\prod_{j=1}^n 
x_j^{[- {b}_{jk}]_+}
\Biggr)^{d_k}
\Biggl(\sum_{s=0}^{d_k}
p_{k,s} {u}_k^{ s}
\Biggr)
&
 i=k\\
x_i& i\neq k,\\
\end{cases}
\end{align}
where
\begin{align}
\label{eq:z1}
u_i
&= \prod_{j=1}^n
x_j^{{b}_{ji}}.
\end{align}
Now, let us start from  a seed $(\bfx,\bfy,B)$ in our setting.
Comparing \eqref{eq:xmut1} and \eqref{eq:xmut2}, we  naturally identify
\begin{align}
\label{eq:pu1}
p_{i,s}=\frac{z_{i,s} {y}_i^{ s}}{\bigoplus_{r=0}^{d_i}
z_{i,r} {y}_i^{ r}}.
\end{align}
Then, it is easy to check that  the mutation \eqref{eq:pmut1} 
follows from \eqref{eq:p1} and \eqref{eq:ymut1}.
Moreover, the  specialization \eqref{eq:pu1} satisfies the following properties:

 (normalization)
 \begin{align}
\label{eq:p2}
\bigoplus_{s=0}^{d_i} p_{i,s} &= 1,
\end{align}
(quasi-reciprocity) for each $i=1,\dots,n$,
there is some $y_i\in \bbP$ such that
\begin{align}
\label{eq:p3}
\frac{p_{i,s}}
{p_{i,0}}
\frac{p_{i,d_i}}
{p_{i,d_i-s}}=y_i^{2s},
\qquad s=1,\dots, d_i.
\end{align}

Conversely, suppose that a family $\bfp$ in \eqref{eq:p1a}
satisfies  properties \eqref{eq:p2} and \eqref{eq:p3}.
First we note that such a $y_i$ is unique, since 
any semifield $\bbP$ is torsion-free \cite[Section 5]{Fomin02}.
Next we define $z_{i,s}\in \bbP$ ($i=1,\dots,n; s=0,\dots,d_i$) by
\begin{align}
\label{eq:u1}
\frac{p_{i,s}}
{p_{i,0}}=y_i^s z_{i,s}.
\end{align}
In particular, we have $z_{i,0}=1$.
Then, substituting \eqref{eq:u1} in \eqref{eq:p3}, we obtain
\begin{align}
\label{eq:u2}
z_{i,s}z_{i,d_i}z_{i,d_i-s}^{-1}=1,
\qquad
s=1,\dots, d_i.
\end{align}
In particular, by setting $s=d_i$, we have $z_{i,d_i}^2=1$.
Once again, since $\bbP$ is torsion-free,
we have $z_{i,d_i}=1$.
Then, again by \eqref{eq:u2}, we have the reciprocity 
$z_{i,s}=z_{i,d_i-s}$ ($s=1,\dots,d_i-1$).
Meanwhile, by \eqref{eq:p2} and \eqref{eq:u1}, we have
\begin{align}
p_{i,0}=
\frac{1}{\bigoplus_{s=0}^{d_i}
z_{i,s} {y}_i^{ s}}.
\end{align}
Then, by \eqref{eq:u1} again, we recover the specialization
\eqref{eq:pu1}.
Finally, it is straightforward to recover the mutation \eqref{eq:ymut1}
from \eqref{eq:pmut1} and \eqref{eq:p3}.
Furthermore, by  \eqref{eq:u1}, 
one can also confirm that the  coefficients $z_{i,s}$ 
do not mutate.

\subsection{Generalized cluster algebras and Laurent property}
\label{subsec:generalized}

Let $\bbT_n$ be the $n$-regular tree whose edges are labeled by the numbers
$1,\dots,n$.
Following \cite{Fomin02},
let us write $t {\buildrel k \over -} t'$
 if the vertices $t$ and $t'$ of $\bbT_n$ are connected by the edge labeled by $k$.

\begin{defn}
A {\em $(\bfd,\bfz)$-cluster pattern $\Sigma$ in $\bbP$\/} is an assignment of a seed
$\Sigma_t$ in $\bbP$ to each vertex $t$ of $\bbT$ such that, if 
$t {\buildrel k \over -} t'$ then the assigned seeds $\Sigma_t$ and $\Sigma_{t'}$
are obtained from each other by the $(\bfd,\bfz)$-mutation at $k$.
\end{defn}

We fix a vertex $t_0$ of $\bbT_n$ and call it the {\em initial vertex}.
Accordingly, the assigned seed $\Sigma_{t_0}=(\bfx_{t_0},\bfy_{t_0},B_{t_0})$
at $t_0$ is called the
{\em initial seed}. Let us write, for simplicity,
\begin{align}
\bfx_{t_0}=\bfx=(x_1,\dots,x_n),
\quad
\bfy_{t_0}=\bfy=(y_1,\dots,y_n),
\quad
B_{t_0}=B=(b_{ij})_{i,j=1}^n.
\end{align}
On the other hand, for the seed $\Sigma_{t}=(\bfx_{t},\bfy_{t},B_{t})$ assigned to a general
vertex $t$ of $\bbT_n$, we write
\begin{align}
\bfx_{t}=(x^t_1,\dots,x^t_n),
\quad
\bfy_{t}=(y^t_1,\dots,y^t_n),
\quad
B_{t}=(b^t_{ij})_{i,j=1}^n.
\end{align}

\begin{defn} The {\em generalized cluster algebra $\calA$ associated with
a   $(\bfd,\bfz)$-cluster pattern $\Sigma$ in $\bbP$\/} is a
$\bbZ\bbP$-subalgebra of $\calF$ generated by all $x$-variables
$x^t_i$ ($t\in \bbT, i=1,\dots,n$) occurring in $\Sigma$.
It is denoted by $\calA=\calA(\bfx,\bfy,B;\bfd,\bfz)$,
where $(\bfx,\bfy,B)$ is the initial seed of $\Sigma$.
\end{defn}

%

For any $(\bfd,\bfz)$-cluster pattern in $\bbP$,
each $x$-variable $x^t_i$ is expressed as a subtraction-free rational function
of $\bfx$ with coefficients in $\bbQ\bbP$.
The following stronger property due to \cite{Chekhov11} is of fundamental importance.
\begin{thm}[{Laurent property \cite[Theorem 2.5]{Chekhov11}}]
\label{thm:Laurent1}
For any $(\bfd,\bfz)$-cluster pattern in $\bbP$,
each $x$-variable $x^t_i$ is expressed as a Laurent polynomial
of $\bfx$ with coefficients in $\bbZ\bbP$.
\end{thm}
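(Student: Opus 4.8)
The plan is to adapt the caterpillar-lemma argument of Fomin and Zelevinsky \cite{Fomin02,Fomin07} to the polynomial exchange relation \eqref{eq:xmut1}; this is in essence the route taken in \cite{Chekhov11}, and below I point out where the higher mutation degrees $d_k$ genuinely intervene. Fix the initial vertex $t_0$ and abbreviate $\bfx=\bfx_{t_0}$; the goal is to show $x^t_i\in\bbZ\bbP[x_1^{\pm1},\dots,x_n^{\pm1}]$ for all $t\in\bbT_n$ and all $i$. A naive induction on the distance from $t_0$ in $\bbT_n$ does not close, because re-expanding \eqref{eq:xmut1} at a vertex $t$ in terms of $\bfx$ requires Laurentness throughout a neighbourhood of $t$ together with a coprimality statement relating the two exchange polynomials meeting at $t$. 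The remedy is the usual one: replace $\bbT_n$ by a suitable \emph{caterpillar tree} --- a distinguished spine $t_0\overset{k_1}{-}t_1\overset{k_2}{-}t_2\overset{k_3}{-}\cdots$ together with pendant edges at each spine vertex accounting for the mutation directions not used by the spine --- observe that any prescribed vertex of $\bbT_n$ can be realised inside such a caterpillar rooted at $t_0$, and induct along the spine.

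The core is the local step. Consider three consecutive spine vertices $t_{m-1}\overset{k}{-}t_m\overset{l}{-}t_{m+1}$ (necessarily $k\neq l$), and assume every $x$-variable occurring in $\Sigma_{t_{m-1}}$, in $\Sigma_{t_m}$, and in the pendant seeds at $t_m$ is already a Laurent polynomial in $\bfx$; one must deduce the same for $x^{t_{m+1}}_l$. Write the two spine exchange relations as $x^{t_{m-1}}_k x^{t_m}_k = P$ and $x^{t_m}_l x^{t_{m+1}}_l = Q$, where, by \eqref{eq:xmut1},
\[
P \;=\; \Biggl(\prod_{j=1}^n (x^{t_m}_j)^{[-\ve b^{t_m}_{jk}]_+}\Biggr)^{\!d_k}\,\frac{\displaystyle\sum_{s=0}^{d_k} z_{k,s}\,(\hat y^{t_m}_k)^{\ve s}}{\displaystyle\bigoplus_{s=0}^{d_k} z_{k,s}\,(y^{t_m}_k)^{\ve s}},
\]
and $Q$ is the analogous expression with $(k,t_{m-1})$ replaced by $(l,t_m)$. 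Using $\hat y^{t_m}_k = y^{t_m}_k\prod_j (x^{t_m}_j)^{b^{t_m}_{jk}}$ from \eqref{eq:yhat1}, one checks that the monomial prefactor exactly clears the negative powers of the $x^{t_m}_j$ contributed by the $\hat y^{t_m}_k$-terms, so that $P$ is --- up to the denominator, which lies in $\bbP$ and is hence a unit in $\bbZ\bbP$ --- an honest polynomial in $x^{t_m}_1,\dots,x^{t_m}_n$ over $\bbZ\bbP$; by the induction hypothesis it is therefore a Laurent polynomial in $\bfx$, and similarly for $Q$. It remains to verify the hypotheses of the caterpillar lemma at $t_m$: roughly, that $P$ carries a monomial with no factor of $x^{t_m}_k$ (so $x^{t_m}_k$ does not divide $P$) and likewise for $Q$ and $x^{t_m}_l$, and that $P$ and $Q$ have no common factor after the relevant specialization. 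Granting these, unique factorization in the Laurent ring lets one cancel $x^{t_m}_l$ against $Q$ in $x^{t_{m+1}}_l=Q/x^{t_m}_l$, so $x^{t_{m+1}}_l$ is Laurent in $\bfx$, which advances the induction one step along the spine.

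The genuinely new feature, compared with the binomial case in \cite{Fomin07}, is the divisibility/coprimality input for an exchange polynomial of arbitrary degree $d_k$. For $d_k=1$ the numerator is the linear binomial $z_{k,0}+z_{k,1}\hat y^{t_m}_k$ and the classical argument exploits this linearity. For $d_k\geq 2$ one instead uses that the one-variable polynomial $\sum_{s=0}^{d_k} z_{k,s}T^s\in\bbZ\bbP[T]$ has constant term and leading coefficient both equal to $1$ (since $z_{k,0}=z_{k,d_k}=1$): this forces $P$ to carry a monomial free of $x^{t_m}_k$, and it lets one control the resultant of $P$ and $Q$ under the substitution $T=\hat y^{t_m}_k$, with the $\hat y$- and $B$-data tracked along the spine via the $\hat y$-mutation rule \eqref{eq:yhatmut1} and the observation (made after \eqref{eq:bmut1}) that \eqref{eq:bmut1} is the ordinary mutation between $DB$ and $DB'$. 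I expect this control of resultants and divisibility for the degree-$d_k$ factor to be where the real work lies; once it is in hand, the remaining bookkeeping is parallel to the ordinary case.
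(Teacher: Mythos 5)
First, a point of comparison: the paper does not prove this theorem at all --- it is imported verbatim from \cite[Theorem 2.5]{Chekhov11} --- so there is no in-paper argument to measure you against; the relevant benchmark is the proof in the cited source, which does indeed proceed by the Fomin--Zelevinsky caterpillar-lemma induction you describe. Your preliminary observations are sound: the monomial prefactor in \eqref{eq:xmut1} clears all negative exponents (for $b^{t_m}_{jk}<0$ the exponent of $x^{t_m}_j$ in the $s$-th term is $[-b^{t_m}_{jk}]_+d_k+sb^{t_m}_{jk}=(s-d_k)b^{t_m}_{jk}\geq 0$), the tropical denominator lies in $\bbP$ and is therefore a unit of $\bbZ\bbP$, and $z_{k,0}=z_{k,d_k}=1$ guarantees that the resulting polynomial is not divisible by any cluster variable, since the $s=0$ term is free of every $x^{t_m}_j$ with $b^{t_m}_{jk}\geq 0$ and the $s=d_k$ term is free of every $x^{t_m}_j$ with $b^{t_m}_{jk}\leq 0$.

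Nevertheless, as a proof the proposal has a genuine gap, and you locate it yourself: everything after ``Granting these'' is conditional on precisely the hypotheses of the caterpillar lemma that carry all of the content of the theorem beyond the ordinary case. Concretely, two things remain unverified. (i) The coprimality of the two exchange polynomials meeting at a spine vertex, after the appropriate specialization of the previously exchanged variable: for $d_k\geq 2$ the polynomial $\sum_{s=0}^{d_k}p_{k,s}T^s$ need not be irreducible, so the binomial-case argument (where each exchange polynomial is linear in the single monomial $w_k$) does not transcribe, and asserting that one ``controls the resultant'' is a restatement of the problem rather than its solution. (ii) The compatibility condition of the caterpillar lemma, which relates the exchange polynomial on the edge $t_m\overset{l}{-}t_{m+1}$ written in the cluster at $t_{m+1}$ to the one written at $t_m$ via substitution of the exchange relation; your sketch does not state this condition, and it is exactly here that the reciprocity $z_{k,s}=z_{k,d_k-s}$ and the coefficient mutation \eqref{eq:pmut1} enter essentially, because reversing the mutation replaces the monomial $u_k$ by its inverse and one needs the reversed polynomial $\sum_s p_{k,d_k-s}T^s$ to be the correct exchange polynomial at the new vertex. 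Until (i) and (ii) are carried out for arbitrary $d_k$, the argument establishes the framework but not the theorem.
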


\subsection{Example}
\label{subsec:ex1}

As the simplest nontrivial example,
we consider
$\bfd=(2,1)$, $\bfz=(z_{1,1})$,
and an initial seed $(\bfx,\bfy,B)$ in $\bbP$
such that
\begin{align}
B=
\begin{pmatrix}
0 & -1 \\
1 & 0\\
\end{pmatrix}.
\end{align}
(This example also appears in the proof of \cite[Theorem 2.7]{Chekhov11}.)
Accordingly,
\begin{align}
\hat{y}_1=y_1x_2,
\quad
\hat{y}_2=y_2x_1^{-1}.
\end{align}
We note that
\begin{align}
DB=
\begin{pmatrix}
0 & -2 \\
1 & 0\\
\end{pmatrix},
\quad
BD=
\begin{pmatrix}
0 & -1 \\
2 & 0\\
\end{pmatrix},
\end{align}
which are the initial exchange matrices for
 ordinary cluster algebras of type $B_2=C_2$.
Set $\Sigma(1)=(\bfx(1),\bfy(1),B(1))$ to be the initial seed $(\bfx,\bfy,B)$,
and consider the seeds
$\Sigma(t)=(\bfx(t), \bfy(t),B(t))$ ($t=2,\dots,7$)
obtained by the following sequence of alternative mutations
of  $\mu_1$ and  $\mu_2$.
\begin{align}
\label{eq:seedmutseq3}
&\Sigma(1)
\
\mathop{\leftrightarrow}^{\mu_{1}}
\
\Sigma(2)
\
\mathop{\leftrightarrow}^{\mu_{2}}
\
\Sigma(3)
\
\mathop{\leftrightarrow}^{\mu_{1}}
\
\Sigma(4)
\
\mathop{\leftrightarrow}^{\mu_{2}}
\
\Sigma(5)
\
\mathop{\leftrightarrow}^{\mu_{1}}
\
\Sigma(6)
\
\mathop{\leftrightarrow}^{\mu_{2}}
\
\Sigma(7).
\end{align}
By \eqref{eq:bmut1}, we have
\begin{align}
 B(t)=(-1)^{t+1}B.
\end{align}
Then, using the exchange relations \eqref{eq:ymut1} and \eqref{eq:xmut1},
we obtain the explicit expressions of $x$- and $y$-variables
in Table \ref{tab:data1},
where we set $z_{1,1}=z$ for simplicity.
We observe the same periodicity of mutations of seeds for
the ordinary cluster algebras of type $B_2=C_2$.

\begin{table}
\begin{alignat*}{3}
&
\begin{cases}
x_1(1)=x_1\\
x_2(1)=x_2,\\
\end{cases}
&&
\hskip-70pt
\begin{cases}
y_1(1)=y_1\\
y_2(1)=y_2,\\
\end{cases}
\\
\allowbreak
&
\begin{cases}
\displaystyle
x_1(2)=x_1^{-1}\frac{1+z\hat{y}_1+\hat{y}_1^2}{1\oplus z y_1\oplus y_1^2}\\
x_2(2)=x_2,\\
\end{cases}
&&
\hskip-70pt
\begin{cases}
y_1(2)=y_1^{-1}\\
y_2(2)=y_2(1\oplus z y_1\oplus y_1^2),\\
\end{cases}
\notag
\\
&
\begin{cases}
\displaystyle
x_1(3)=x_1^{-1}\frac{1+z\hat{y}_1+\hat{y}_1^2}{1\oplus z y_1\oplus y_1^2}\\
\displaystyle
x_2(3)=x_2^{-1}
\frac{1 + \hat{y}_2 +z \hat{y}_1\hat{y}_2+
\hat{y}_1^2 \hat{y}_2}
{1\oplus y_2\oplus z y_1y_2
\oplus y_1^2y_2},\\
\end{cases}
&&
\hskip-70pt
\begin{cases}
y_1(3)=y_1^{-1}(1\oplus y_2 \oplus z y_1y_2\oplus y_1^2 y_2)\\
y_2(3)=y_2^{-1}(1\oplus z y_1\oplus y_1^2)^{-1},\\
\end{cases}
\notag
\\
\allowbreak
&
\begin{cases}
\displaystyle
x_1(4)=x_1x_2^{-2}
\frac{
1+ 2 \hat{y}_2+ \hat{y}_2^2
 + z\hat{y}_1\hat{y}_2+ z\hat{y}_1\hat{y}_2^2 + \hat{y}_1^2\hat{y}_2^2}
{1\oplus 2 y_2\oplus y_2^2
 \oplus zy_1y_2\oplus zy_1y_2^2 \oplus y_1^2y_2^2}
\\
\displaystyle
x_2(4)=x_2^{-1}
\frac{1 + \hat{y}_2 +z \hat{y}_1\hat{y}_2+
\hat{y}_1^2 \hat{y}_2}
{1\oplus y_2\oplus z y_1y_2
\oplus y_1^2y_2},\\
\end{cases}
\notag
\\
\allowbreak
&
&&
\hskip-70pt
\begin{cases}
y_1(4)=y_1(1\oplus y_2 \oplus z y_1y_2\oplus y_1^2 y_2)^{-1}\\
y_2(4)=y_1^{-2}y_2^{-1}(1\oplus 2 y_2\oplus y_2^2\\
\qquad \qquad \oplus zy_1y_2\oplus zy_1y_2^2 \oplus y_1^2y_2^2
),\\
\end{cases}
\notag
\\
&
\begin{cases}
\displaystyle
x_1(5)=x_1x_2^{-2}
\frac{
1+ 2 \hat{y}_2+ \hat{y}_2^2
 + z\hat{y}_1\hat{y}_2+ z\hat{y}_1\hat{y}_2^2 + \hat{y}_1^2\hat{y}_2^2}
{1\oplus 2 y_2\oplus y_2^2
 \oplus zy_1y_2\oplus zy_1y_2^2 \oplus y_1^2y_2^2}
\\
\displaystyle
x_2(5)=
x_1x_2^{-1}
\frac{1 + \hat{y}_2 }{1\oplus y_2},\\
\end{cases}
\notag
\\
\allowbreak
&
&&
\hskip-70pt
\begin{cases}
y_1(5)=y_1^{-1}y_2^{-1}(1\oplus y_2)\\
y_2(5)=y_1^{2}y_2(1\oplus 2 y_2\oplus y_2^2\\
\qquad \qquad \oplus zy_1y_2\oplus zy_1y_2^2 \oplus y_1^2y_2^2
)^{-1},\\
\end{cases}
\notag
\\
\allowbreak
&
\begin{cases}
\displaystyle
x_1(6)=x_1\\
\displaystyle
x_2(6)=x_1x_2^{-1}\frac{1+\hat{y}_2}{1\oplus y_2},\\
\end{cases}
&&
\hskip-70pt
\begin{cases}
\displaystyle
y_1(6)=y_1y_2(1\oplus y_2)^{-1}\\
y_2(6)=y_2^{-1},\\
\end{cases}
\notag
\\
\allowbreak
&
\begin{cases}
x_1(7)=x_1\\
x_2(7)=x_2,\\
\end{cases}
&&
\hskip-70pt
\begin{cases}
y_1(7)=y_1\\
y_2(7)=y_2.\\
\end{cases}
\notag
\end{alignat*}
\caption{$x$- and $y$-variables  for sequence \eqref{eq:seedmutseq3}.}
\label{tab:data1}
\end{table}

\section{Structure of seeds in generalized cluster patterns}
The goal of this section is to establish some basic structural results on seeds
in a $(\bfd,\bfz)$-cluster pattern
which are parallel to the ones in \cite{Fomin07}.

\subsection{$X$-functions and $Y$-functions}
Let us temporarily regard $\bfy=(y_i)_{i=1}^n$, and $\bfz
=(z_{i,s})_{i=1,\dots,n;s=1,\dots,d_i-1}$ with 
$z_{i,s}=z_{i,d_i-s}$ as formal variables.
Let $\bbQ_{\mathrm{sf}}(\bfy, \bfz)$  be the {\em universal
semifield\/} of $\bfy$ and $\bfz$,
which consists of the rational functions in $\bfy$ and  $\bfz$
with subtraction-free expressions \cite{Fomin07}.
Let $\mathrm{Trop}(\bfy, \bfz)$  be the {\em tropical
semifield\/} of $\bfy$ and $\bfz$,
which is the multiplicative abelian group freely generated by  $\bfy$ and  $\bfz$
with  {\em tropical sum\/} $\oplus$ defined by
\begin{align}
\label{eq:tsum1}
\left(
\prod_i y_i^{a_i} \prod_{i,s} z_{i,s}^{a_{i,s}}
\right)
\oplus
\left(
\prod_i y_i^{b_i} \prod_{i,s} z_{i,s}^{b_{i,s}}
\right)
=
\prod_i y_i^{\min(a_i,b_i)} \prod_{i,s} z_{i,s}^{\min(a_{i,s},b_{i,s})}.
\end{align}

\begin{defn}
A $(\bfd,\bfz)$-cluster pattern with {\em principal coefficients\/}
is a
  $(\bfd,\bfz)$-cluster pattern in $\bbP=\mathrm{Trop}(\bfy, \bfz)$
with initial seed $(\bfx,\bfy,B)$,
where $\bfx$ and $B$ are arbitrary.
\end{defn}

\begin{defn}
Let $\Sigma$ be the $(\bfd,\bfz)$-cluster pattern with principal coefficients
and initial seed $(\bfx,\bfy,B)$.
By the Laurent property in Theorem \ref{thm:Laurent1},
each $x$-variable $x^t_i$ in $\Sigma$ is expressed as
 $X^t_i(\bfx,\bfy,\bfz)\in \bbZ\bbP[\bfx^{\pm1}]$
with $\bbP=\mathrm{Trop}(\bfy, \bfz)$.
We call them the {\em $X$-functions} of $\Sigma$.
\end{defn}

For principal coefficients,
actually we have the following stronger result than Theorem \ref{thm:Laurent1},
which is parallel to
\cite[Proposition 11.2]{Fomin03a} and
\cite[Proposition 3.6]{Fomin07}.

\begin{prop}
\label{prop:poly1}
 We have
\begin{align}
\label{eq:X1}
X^t_i(\bfx,\bfy,\bfz)\in \mathbb{Z}[\bfx^{\pm1},\bfy,\bfz].
\end{align}
\end{prop}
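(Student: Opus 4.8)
The plan is to mimic the inductive argument of \cite[Proposition 3.6]{Fomin07}, using the Laurent property (Theorem~\ref{thm:Laurent1}) as the starting point and then controlling the denominators in $\bfy$ and $\bfz$ that could a priori appear. Since we already know $X^t_i(\bfx,\bfy,\bfz)\in\bbZ\bbP[\bfx^{\pm1}]$ with $\bbP=\mathrm{Trop}(\bfy,\bfz)$, and $\bbZ\bbP=\bbZ[\bfy^{\pm1},\bfz^{\pm1}]$, the only thing to prove is that no negative powers of $y_i$ or $z_{i,s}$ occur; equivalently, that $X^t_i\in\bbZ[\bfx^{\pm1},\bfy,\bfz]$. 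I would argue by induction on the distance in $\bbT_n$ from $t_0$ to $t$, the base case $t=t_0$ being $X^{t_0}_i=x_i$, which is trivially in $\bbZ[\bfx^{\pm1},\bfy,\bfz]$.

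For the inductive step, suppose $t\,{\buildrel k\over -}\,t'$ with $t$ closer to $t_0$, so all $X^t_j$ lie in $\bbZ[\bfx^{\pm1},\bfy,\bfz]$, and we must show the same for $X^{t'}_k$ (the other variables being unchanged). Here I would use the exchange relation \eqref{eq:xmut1} with $\ve=1$, written at the vertex $t$: the new $x$-variable is
\begin{align*}
X^{t'}_k=(X^t_k)^{-1}\Biggl(\prod_{j=1}^n (X^t_j)^{[-b^t_{jk}]_+}\Biggr)^{d_k}\,
\frac{\sum_{s=0}^{d_k}z_{k,s}(\hat y^t_k)^{s}}{\bigoplus_{s=0}^{d_k}z_{k,s}(y^t_k)^{s}}.
\end{align*}
In the tropical semifield $\mathrm{Trop}(\bfy,\bfz)$ the denominator $\bigoplus_{s=0}^{d_k}z_{k,s}(y^t_k)^{s}$ is a single Laurent monomial in $\bfy$ and $\bfz$; the key point is that it is actually a monomial with \emph{nonnegative} exponents divided by nothing problematic. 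More precisely, one checks by induction (parallel to the computation of the tropical $\hat y$-dynamics, i.e. the $c$-vector recursion underlying \eqref{eq:ymut1}) that the tropical evaluation $[y^t_k]_{\mathrm{Trop}}$ and each $z_{k,s}$ are monomials in $\bfy,\bfz$ whose $\oplus$-sum over $s$ is again such a monomial, and dividing by it only removes a monomial factor from the numerator $\sum_s z_{k,s}(\hat y^t_k)^s$. By the Laurent property we already know the whole expression lies in $\bbZ[\bfx^{\pm1},\bfy^{\pm1},\bfz^{\pm1}]$, so it suffices to rule out negative $\bfy$- or $\bfz$-exponents; since $\sum_s z_{k,s}(\hat y^t_k)^s$ visibly has nonnegative $\bfz$-exponents and nonnegative $\bfy$-exponents (because $\hat y^t_i=X^t_i$-data times $y$'s with the $c$-vector recursion preserving the relevant positivity on the nose — here one invokes the sign-coherence-type bookkeeping exactly as in \cite{Fomin07}), and the tropical denominator only divides out a monomial, the negative powers can be cancelled only against the explicit $\prod_j (X^t_j)^{[-b^t_{jk}]_+}$ and $(X^t_k)^{-1}$ factors, which carry no $\bfy,\bfz$ in their denominators by the inductive hypothesis.

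The main obstacle, and the step deserving the most care, is showing that the \emph{tropical} denominator $\bigoplus_{s=0}^{d_k}z_{k,s}(y^t_k)^{s}$ does not introduce negative powers after cancellation — i.e. that the numerator $\sum_{s=0}^{d_k}z_{k,s}(\hat y^t_k)^s$ is divisible (in $\bbZ[\bfx^{\pm1},\bfy,\bfz]$) by that monomial. The clean way is to run a simultaneous induction: one shows that for every $t$ and $i$, the $\hat y$-function $\hat y^t_i$ (defined via \eqref{eq:yhat1} and mutating by \eqref{eq:yhatmut1}) can be written as $[y^t_i]_{\mathrm{Trop}}\cdot (\text{a ratio of }F\text{-polynomial-like elements of }\bbZ[\bfx^{\pm1},\bfy,\bfz])$, so that $z_{k,s}(\hat y^t_k)^s$ factors as $z_{k,s}[y^t_k]^s_{\mathrm{Trop}}$ times something in $\bbZ[\bfx^{\pm1},\bfy,\bfz]$, and the common monomial factor $\bigoplus_s z_{k,s}[y^t_k]^s_{\mathrm{Trop}}$ divides the sum. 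Once this divisibility is in hand, \eqref{eq:xmut1} manifestly produces an element of $\bbZ[\bfx^{\pm1},\bfy,\bfz]$ from elements of the same ring, closing the induction. All of this is the word-for-word generalized-cluster analogue of the argument in \cite[Proposition 3.6]{Fomin07}, with the single binomial $1\oplus y_k$ replaced by the polynomial $\bigoplus_s z_{k,s}y_k^s$, and the reciprocity \eqref{eq:p1} is what guarantees the $\ve$-independence used implicitly when one fixes $\ve=1$.
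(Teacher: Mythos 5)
There is a genuine gap, and it comes from mislocating the difficulty. The tropical denominator $\bigoplus_{s=0}^{d_k} z_{k,s}(y^t_k)^{s}$, which you treat as the delicate point, is in fact harmless: writing $y^t_k=\prod_j y_j^{c^t_{jk}}$, one computes directly that this tropical sum equals $\prod_j y_j^{-d_k[-c^t_{jk}]_+}$ (this is \eqref{eq:trop1}), so dividing by it is multiplication by a monomial with \emph{nonnegative} exponents and no divisibility question arises. Relatedly, your claim that $\sum_s z_{k,s}(\hat y^t_k)^s$ ``visibly has nonnegative $\bfy$-exponents'' is false before this division: $(y^t_k)^s$ contributes exponents $sc^t_{jk}$, which are negative when $c^t_{jk}<0$, and it is precisely the monomial coming from the tropical denominator that restores nonnegativity, with no sign-coherence input needed. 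Invoking ``sign-coherence-type bookkeeping'' here would in any case be circular, since in this paper sign-coherence is only obtained later, as a consequence of identifying the $c$-vectors with ordinary ones.

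The step that actually needs an argument is the factor $(X^t_k)^{-1}$ in \eqref{eq:xmut1}. Your assertion that it ``carries no $\bfy,\bfz$ in its denominator by the inductive hypothesis'' does not follow: the hypothesis only gives $X^t_k\in\bbZ[\bfx^{\pm1},\bfy,\bfz]$, and the reciprocal of such a polynomial can perfectly well leave negative powers of $\bfy$ or $\bfz$ in the final Laurent polynomial (for instance if $X^t_k$ happened to be divisible by some $y_i$ or $z_{i,s}$). Excluding this is exactly what the paper's proof is organized around: one fixes a single variable $p\in\bfy\cup\bfz$ and carries the \emph{strengthened} inductive invariant that each $X^t_i$ is a polynomial in $p$ with \emph{nonzero constant term} admitting a subtraction-free expression. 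The numerator of \eqref{eq:xmut1} is then again such a polynomial --- here the normalization \eqref{eq:p2} enters, guaranteeing that the tropically evaluated coefficients $p_{k,s}$ are polynomials in $p$ with no common factor --- and the ``trivial lemma'' of \cite[Lemma 5.2]{Fomin03a}, combined with the Laurent property (Theorem~\ref{thm:Laurent1}), shows that division by $X^t_k$ preserves the invariant. Without carrying the nonzero-constant-term condition through the induction, your argument cannot close.
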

\begin{proof}
We follow the argument in the proof of \cite[Proposition 11.2]{Fomin03a}.
Let $p$ be any variable in $\bfy$ or $\bfz$.
Let us view $X^t_i(\bfx,\bfy,\bfz)$ as a Laurent polynomial in $p$, say $h(p)$,
with coefficients of Laurent polynomials in the rest of the variables in
$\bfx$, $\bfy$, and $\bfz$.
We show that $h(p)$ is a polynomial in $p$ with nonzero constant term
having subtraction-free rational expression
 by the induction on the distance between $t$ and  $t_0$ in $\bbT_n$.
The crucial point is that the coefficients 
$p_{k,s}=z_{k,s}y_k^s/
\bigoplus_{r=0}^{d_k} z_{k,r}y_k^r$ in the mutation \eqref{eq:xmut1} are normalized
as \eqref{eq:p2}. Since $\bbP=\mathrm{Trop}(\bfy, \bfz)$,
this means that 
$p_{k,s}$ ($s=0,\dots, d_r$)
are polynomials in $p$,
and there  is no common factor in $p$.
Thus, the right hand side of \eqref{eq:xmut1} is a polynomial in $p$
with nonzero constant term
having subtraction-free rational expression
by the induction hypothesis and the ``trivial lemma" in
\cite[Lemma 5.2]{Fomin03a}.
\end{proof}

\begin{defn}
Let  $\Sigma$ be  the
$(\bfd,\bfz)$-cluster pattern  in the universal semifield $\bbQ_{\mathrm{sf}}(\bfy,\bfz)$
with  initial seed $(\bfx,\bfy,B)$.
Each $y$-variable $y^t_i$  in $\Sigma$ is expressed as a subtraction-free rational function
$Y^t_i(\bfy,\bfz)\in \bbQ_{\mathrm{sf}}(\bfy, \bfz)$.
We call them the {\em $Y$-functions} of $\Sigma$.
\end{defn}

Due to the universal property of the semifield $\bbQ_{\mathrm{sf}}(\bfy,\bfz)$
\cite[Definition 2.1]{Fomin07},
the following fact holds.
\begin{lem}
\label{lem:yeval1}
For any $(\bfd,\bfz)$-cluster pattern in $\bbP$ with the same initial exchange matrix $B$ as above, we have
\begin{align}
y^t_{i} = Y^t_i\vert_{\bbP}(\bfy,\bfz),
\end{align}
where the right hand side stands for the evaluation of $Y^t_i(\bfy,\bfz)$
in $\bbP$.
\end{lem}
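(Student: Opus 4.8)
The plan is to invoke the universal property of the semifield $\bbQ_{\mathrm{sf}}(\bfy,\bfz)$ directly, exactly as in the ordinary case \cite{Fomin07}. Recall that $\bbQ_{\mathrm{sf}}(\bfy,\bfz)$ is the free (in the categorical sense) semifield on the generators $\bfy$ and $\bfz$ subject to no relations beyond those forced by the semifield axioms: for any semifield $\bbP$ and any choice of elements $\widetilde{y}_i,\widetilde{z}_{i,s}\in\bbP$ with $\widetilde{z}_{i,s}=\widetilde{z}_{i,d_i-s}$, there is a unique semifield homomorphism $\varphi\colon\bbQ_{\mathrm{sf}}(\bfy,\bfz)\to\bbP$ sending $y_i\mapsto\widetilde{y}_i$ and $z_{i,s}\mapsto\widetilde{z}_{i,s}$. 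Evaluation ``in $\bbP$'' is precisely the application of $\varphi$.

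First I would fix a $(\bfd,\bfz)$-cluster pattern in an arbitrary $\bbP$ with initial seed $(\bfx,\bfy,B)$ (same $B$), so that its initial coefficient tuple is $(\widetilde{y}_1,\dots,\widetilde{y}_n)$ and the frozen coefficients are $(\widetilde{z}_{i,s})$ satisfying the reciprocity \eqref{eq:p1}; let $\varphi$ be the induced homomorphism. The key point is that the $y$-mutation rule \eqref{eq:ymut1} is built entirely from the semifield operations $\cdot$, $(-)^{-1}$, $\oplus$, together with integer exponents $[\ve b_{ki}]_+$, $d_k$, $-b_{ki}$ coming from the (unchanging) exchange matrices $B_t$, which themselves evolve by \eqref{eq:bmut1} independently of the semifield. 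Hence the whole tuple $\bfy_t$ is obtained from $\bfy_{t_0}$ by applying a fixed word in these operations determined only by the path from $t_0$ to $t$ in $\bbT_n$ and by the $B_t$'s. Since $\varphi$ is a semifield homomorphism it commutes with $\cdot,(-)^{-1},\oplus$ and with integer powers, so by induction on the distance from $t_0$ one gets $\varphi(Y^t_i(\bfy,\bfz)) = y^t_i$; the base case $t=t_0$ is $\varphi(y_i)=\widetilde{y}_i$ and $\varphi(z_{i,s})=\widetilde{z}_{i,s}$, and the inductive step is just the observation that the right-hand side of \eqref{eq:ymut1}, read in $\bbQ_{\mathrm{sf}}(\bfy,\bfz)$ versus in $\bbP$, differs only by applying $\varphi$ termwise. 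This is exactly $y^t_i = Y^t_i\vert_{\bbP}(\bfy,\bfz)$.

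There is really no serious obstacle here; the only point that needs a word of care is that the $Y$-functions are genuinely well defined, i.e.\ that the $(\bfd,\bfz)$-cluster pattern in $\bbQ_{\mathrm{sf}}(\bfy,\bfz)$ makes sense — for this one notes that $\bbQ_{\mathrm{sf}}(\bfy,\bfz)$ is itself a semifield, so Definition \ref{defn:mut1} applies, and the reciprocity condition \eqref{eq:p1} holds among the formal generators $z_{i,s}$ by construction, so \eqref{eq:ymut1} is independent of $\ve$ there as well. One should also note that the expression $\bigoplus_{s=0}^{d_k} z_{k,s} y_k^{\ve s}$ appearing in \eqref{eq:ymut1}, evaluated in $\bbQ_{\mathrm{sf}}$, is a nonzero element (it lies in the semifield), so raising it to the negative power $-b_{ki}$ is legitimate, and $\varphi$ of it equals the corresponding element of $\bbP$. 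With these remarks the induction goes through verbatim as in \cite[Section 3]{Fomin07}, and the lemma follows.
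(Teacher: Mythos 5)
Your proposal is correct and is exactly the paper's argument: the paper proves this lemma in one line by citing the universal property of $\bbQ_{\mathrm{sf}}(\bfy,\bfz)$ from \cite[Definition 2.1]{Fomin07}, and your write-up simply fills in the routine details (the induced homomorphism $\varphi$ commutes with the semifield operations appearing in \eqref{eq:ymut1}, so an induction along the path from $t_0$ to $t$ gives $\varphi(Y^t_i)=y^t_i$). No gaps.
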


\subsection{$c$-vectors, $F$-polynomials, and $g$-vectors}
Let us extend the notions of 
$c$-vectors, $F$-polynomials, and $g$-vectors in \cite{Fomin07} to 
a $(\bfd,\bfz)$-cluster pattern with principal coefficients.

\subsubsection{$C$-matrices and $c$-vectors}
For a $(\bfd,\bfz)$-cluster pattern with principal coefficients,
each $y$-variable $y^t_i\in \mathrm{Trop}(\bfy, \bfz)$ is, by definition, a Laurent monomial of $\bfy$ and $\bfz$
with coefficient 1.
The following simple fact was observed in \cite{Iwaki14b} in the special case.
\begin{lem}
\label{lem:yev1}
Each $y$-variable $y^t_i$ is actually a Laurent monomial of $\bfy$
with coefficient 1.
\end{lem}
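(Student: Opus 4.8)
The plan is to show that the $\bfz$-variables never enter the $y$-variables of a cluster pattern with principal coefficients, by tracking how $\bfz$ could possibly propagate through the mutation rule \eqref{eq:ymut1}. First I would set up an induction on the distance from $t_0$ in $\bbT_n$, with the inductive hypothesis that $y^t_i$ is a Laurent monomial in $\bfy$ alone with coefficient $1$. The base case $t=t_0$ is immediate, since $y_i=y_i$. For the inductive step, suppose $t {\buildrel k \over -} t'$ and $\bfy_t$ involves only $\bfy$; I want to conclude the same for $\bfy_{t'}$.

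The only place $\bfz$ can appear on the right-hand side of \eqref{eq:ymut1} is through the factor $\bigl(\bigoplus_{s=0}^{d_k} z_{k,s}\,y^t_k{}^{\ve s}\bigr)^{-b^t_{ki}}$. The key point is that in $\bbP=\mathrm{Trop}(\bfy,\bfz)$ the tropical sum picks out the componentwise minimum of exponents, as in \eqref{eq:tsum1}. By the inductive hypothesis $y^t_k$ is a Laurent monomial in $\bfy$ only, so each term $z_{k,s}\,y^t_k{}^{\ve s}$ has $\bfz$-exponent vector equal to that of $z_{k,s}$, namely the standard basis vector supported at $(k,s)$ for $1\le s\le d_k-1$, and the zero vector for $s=0$ and $s=d_k$ (recall $z_{k,0}=z_{k,d_k}=1$). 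Hence the minimum over $s=0,\dots,d_k$ of all these $\bfz$-exponent vectors is the zero vector — the $s=0$ (equivalently $s=d_k$) term already contributes $z$-exponent $0$ in every coordinate. Therefore $\bigoplus_{s=0}^{d_k} z_{k,s}\,y^t_k{}^{\ve s}$ is a Laurent monomial in $\bfy$ only, and so is its $(-b^t_{ki})$-th power. Combined with the inductive hypothesis applied to $y^t_i$ and $y^t_k$, formula \eqref{eq:ymut1} then exhibits $y^{t'}_i$ as a Laurent monomial in $\bfy$ with coefficient $1$, completing the induction.

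I expect the only subtlety — not really an obstacle — to be making precise that the $z$-exponent of the tropical sum is the coordinatewise minimum and that the $s=0$ term forces this minimum to vanish; this is just unwinding the definition \eqref{eq:tsum1} together with $z_{k,0}=1$. One should also note that this argument is genuinely special to principal coefficients: it uses that the initial $y$-variables are the free generators $\bfy$ with no $\bfz$ present, so that $\bfz$ is simply never introduced into the $y$-dynamics. (In the $x$-dynamics, by contrast, $\bfz$ does appear, via the numerators $\sum_{s=0}^{d_k} z_{k,s}\hat y_k^{\ve s}$ in \eqref{eq:xmut1}, consistent with Proposition \ref{prop:poly1}.)
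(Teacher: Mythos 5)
Your proof is correct and follows exactly the paper's argument: induction on the distance from $t_0$, with the key observation that in $\mathrm{Trop}(\bfy,\bfz)$ the factor $\bigoplus_{s=0}^{d_k} z_{k,s}\,y^t_k{}^{\ve s}$ has vanishing $\bfz$-exponent because the $s=0$ term $z_{k,0}=1$ forces the coordinatewise minimum to be zero. The paper states this more tersely, but the content is identical.
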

\begin{proof}
This is equivalent to saying that the frozen coefficients $\bfz$ never enter in $y^t_i$.
This is true for the initial $y$-variables.
Then, the claim can be shown by  induction on the distance  between $t$ and  $t_0$ in $\bbT_n$,
 by inspecting
the mutation \eqref{eq:ymut1} and the definition of the tropical sum \eqref{eq:tsum1}.
\end{proof}

\begin{defn}
Let $\Sigma$ be a $(\bfd,\bfz)$-cluster pattern with principal coefficients.
Let us express each $y$-variable $y^t_j$  in $\Sigma$ as
\begin{align}
\label{eq:yc1}
y^t_j =Y^t_i\vert_{\mathrm{Trop}(\bfy, \bfz)}(\bfy,\bfz)=
 \prod_{i=1}^n y_i^{c_{ij}^t}.
\end{align}
The resulting matrices $C^t=(c^t_{ij})_{i,j=1}^n$ 
and their column vectors $c^t_{j}=(c^t_{ij})_{i=1}^n$
are called
the {\em $C$-matrices\/} and
the {\em $c$-vectors\/}  of $\Sigma$,
respectively.
\end{defn}

The following mutation/recurrence formula provides a combinatorial
description of $c$-vectors.

\begin{prop} The $c$-vectors
of a $(\bfd,\bfz)$-cluster pattern with principal coefficients
satisfy the following recurrence relation:
\par
(initial condition)
\begin{align}
 c_{ij}^{t_0} = \delta_{ij},
\end{align}
(recurrence relation)  for $t {\buildrel k \over -} t'$,
\begin{align}
\label{eq:cmut1}
c^{t'}_{ij}=
\begin{cases}
-c^{t}_{ik} & j=k\\
c^{t}_{ij} +c^t_{ik}[\ve d_kb^t_{kj}]_+
+[-\ve c^t_{ik}]_+ d_kb^t_{kj}
&j\neq k,
\end{cases}
\end{align}
where $\ve=\pm1$ and it is independent of the choice of the sign $\ve$.
\end{prop}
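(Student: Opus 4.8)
The plan is to read the recurrence straight off the $y$-variable mutation rule \eqref{eq:ymut1}, evaluated in $\bbP=\mathrm{Trop}(\bfy,\bfz)$. By Lemma \ref{lem:yev1}, in a $(\bfd,\bfz)$-cluster pattern with principal coefficients every $y^t_j$ is a Laurent monomial $\prod_{i}y_i^{c^t_{ij}}$ in the $y$-variables alone, so the matrices $C^t=(c^t_{ij})$ are well defined and it remains only to verify the two displayed formulas. The initial condition is immediate, since $y^{t_0}_j=y_j=\prod_i y_i^{\delta_{ij}}$, whence $c^{t_0}_{ij}=\delta_{ij}$. Fixing an edge $t{\buildrel k \over -}t'$ and applying \eqref{eq:ymut1} with exchange matrix $B^t$ (and renaming its free index to $j$), the branch $j=k$ gives $y^{t'}_k=(y^t_k)^{-1}=\prod_i y_i^{-c^t_{ik}}$, i.e.\ $c^{t'}_{ik}=-c^t_{ik}$.

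For $j\neq k$ I first evaluate the tropical factor $\bigoplus_{s=0}^{d_k}z_{k,s}(y^t_k)^{\ve s}$ in $\mathrm{Trop}(\bfy,\bfz)$. By the definition \eqref{eq:tsum1} of $\oplus$, the exponent of each generator in this sum is the minimum of its exponents over the $d_k+1$ terms. Since $z_{k,s}$ involves no $y_i$, the $y_i$-exponent of the $s$-th term is $\ve s\,c^t_{ik}$, and
\begin{align*}
\min_{0\le s\le d_k}\ \ve s\,c^t_{ik}=d_k\min(0,\ve c^t_{ik})=-d_k\,[-\ve c^t_{ik}]_+,
\end{align*}
the minimum being attained at $s=0$ or at $s=d_k$ according to the sign of $\ve c^t_{ik}$. (That no $z_{k,s}$ survives in the sum is in any case already guaranteed by Lemma \ref{lem:yev1}.) Hence $\bigoplus_{s=0}^{d_k}z_{k,s}(y^t_k)^{\ve s}=\prod_i y_i^{-d_k[-\ve c^t_{ik}]_+}$ in $\mathrm{Trop}(\bfy,\bfz)$.

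Substituting this and $y^t_k=\prod_i y_i^{c^t_{ik}}$ into the branch $j\neq k$ of \eqref{eq:ymut1} and collecting the exponent of $y_i$ yields
\begin{align*}
c^{t'}_{ij}=c^t_{ij}+c^t_{ik}\,d_k[\ve b^t_{kj}]_+ +[-\ve c^t_{ik}]_+\,d_k b^t_{kj};
\end{align*}
since $d_k>0$ one has $d_k[\ve b^t_{kj}]_+=[\ve d_k b^t_{kj}]_+$, which is exactly the asserted formula. Independence of the sign $\ve$ is inherited from the corresponding property of \eqref{eq:ymut1} noted after Definition \ref{defn:mut1}; alternatively it follows directly from the identity $[x]_+-[-x]_+=x$, which shows that $a[\ve b]_+ +[-\ve a]_+ b$ takes the same value for $\ve=1$ and $\ve=-1$. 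The only step that needs a little care is the tropical evaluation of $\bigoplus_{s}z_{k,s}(y^t_k)^{\ve s}$ — one must confirm that the frozen coefficients genuinely disappear (the content of Lemma \ref{lem:yev1}) and pin down where the minimum is attained — after which everything else is a routine bookkeeping of exponents; I do not expect any serious obstacle.
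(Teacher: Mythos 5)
Your proposal is correct and follows essentially the same route as the paper: evaluate the $y$-mutation \eqref{eq:ymut1} in $\mathrm{Trop}(\bfy,\bfz)$, observe that the frozen coefficients drop out of the tropical sum, use the identity $\bigoplus_{s=0}^{d_k}(y^t_k)^{\ve s}=\prod_i y_i^{-d_k[-\ve c^t_{ik}]_+}$ (the paper's \eqref{eq:trop1}), and compare exponents. The only difference is that you derive the tropical evaluation explicitly where the paper simply cites \eqref{eq:ymut2} and \eqref{eq:trop1}; the content is identical.
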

\begin{proof}
As already remarked in the proof of Lemma \ref{lem:yev1}, 
for a $(\bfd,\bfz)$-cluster pattern with principal coefficients,
the mutation \eqref{eq:ymut1} is  simplified as
\begin{align}
\label{eq:ymut2}
{y}^{t'}_i&=
\begin{cases}
\displaystyle
{y}^{t}_k{}^{-1}
 & i=k\\
 \displaystyle
{y}^t_i
\Biggl(
{y}^t_k{}^{[\ve {b}^t_{ki}]_+} 
\Biggr)^{d_k}
\Biggl(
\bigoplus_{s=0}^{d_k}
 {y}^t_k{}^{\ve s}
\Biggr)^{-{b}^t_{ki}}
& i\neq k.\\
\end{cases}
\end{align}
This is equivalent to \eqref{eq:cmut1} due to the following formula
in $\mathrm{Trop}(\bfy, \bfz)$:
\begin{align}
\label{eq:trop1}
\frac{1
}
{\displaystyle
\bigoplus_{s=0}^{d_k}
\Biggl(\prod_{j=1}^n 
y_j^{\ve c^t_{jk}}
\Biggr)^{ s}
}
=
\Biggl(\prod_{j=1}^n 
y_j^{[-\ve c^t_{jk}]_+}
\Biggr)^{d_k}
.
\end{align}
\end{proof}

We observe that
the above relation  coincides with the one for the $c$-vectors
of the ordinary cluster pattern with principal coefficients
and  initial seed $(\bfx,\bfy,DB)$ in
\cite[Proposition 5.8]{Fomin02}.
Therefore, we have the following result.

\begin{prop}
\label{prop:c1}
 The $c$-vectors
of the $(\bfd,\bfz)$-cluster pattern with principal coefficients
and  initial seed $(\bfx,\bfy,B)$
coincide with
 the $c$-vectors
of the ordinary cluster pattern with principal coefficients and
  initial seed $(\bfx,\bfy,DB)$.
\end{prop}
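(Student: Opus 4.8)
The plan is to deduce Proposition \ref{prop:c1} directly from the recurrence relation \eqref{eq:cmut1} together with the known theory of $c$-vectors for ordinary cluster patterns. The key observation is that \eqref{eq:cmut1}, which governs the $c$-vectors of the $(\bfd,\bfz)$-cluster pattern with principal coefficients and initial seed $(\bfx,\bfy,B)$, involves the exchange matrices $B^t$ only through the products $d_k b^t_{kj}$, i.e. through the rows of $D B^t$. So the first step is to record that the mutation \eqref{eq:bmut1} for the $B$-matrices is, as already noted in the text right after Definition \ref{defn:mut1}, equivalent to the ordinary mutation rule applied to $D B^t$: if $\widetilde{B}^t := D B^t$ then $\widetilde{B}^{t'} = \mu_k(\widetilde{B}^t)$ in the ordinary sense. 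Hence along the tree $\bbT_n$ the family $(D B^t)_{t}$ is exactly the family of exchange matrices of the ordinary cluster pattern with initial exchange matrix $DB$.

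Next I would compare recurrences term by term. For an ordinary cluster pattern with principal coefficients and initial exchange matrix $\widetilde B = DB$, \cite[Proposition 5.8]{Fomin02} gives the $c$-vector recurrence
\begin{align}
\label{eq:ordinaryc}
\widetilde c^{t'}_{ij}=
\begin{cases}
-\widetilde c^{t}_{ik} & j=k\\
\widetilde c^{t}_{ij} +\widetilde c^t_{ik}[\ve \widetilde b^t_{kj}]_+
+[-\ve \widetilde c^t_{ik}]_+ \widetilde b^t_{kj}
&j\neq k,
\end{cases}
\end{align}
with the initial condition $\widetilde c^{t_0}_{ij}=\delta_{ij}$. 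Substituting $\widetilde b^t_{kj} = d_k b^t_{kj}$, this is literally \eqref{eq:cmut1}, and the initial conditions agree. Both recurrences determine their sequences uniquely from the initial vertex $t_0$ by induction on the distance to $t_0$ in $\bbT_n$. Therefore, starting from the same initial data $C^{t_0}=I$ and the identical exchange-matrix data $(D B^t)_t$, the two families of $c$-vectors coincide at every vertex $t$: $c^t_{ij} = \widetilde c^t_{ij}$ for all $i,j,t$.

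I do not expect any real obstacle here — this is a ``matching of recurrences'' argument, and all the ingredients (the reformulation of \eqref{eq:bmut1} via $D$, the tropical identity \eqref{eq:trop1} which has already been used to establish \eqref{eq:cmut1}, and \cite[Proposition 5.8]{Fomin02}) are in place. The only point worth a sentence of care is the bookkeeping of which matrix plays the role of the exchange matrix for the ordinary pattern: one should fix the convention $\widetilde B^{t_0} = D B$ and check that the ordinary mutation $\mu_k$ indeed sends $D B^t$ to $D B^{t'}$ for every edge $t \overset{k}{-} t'$ and not merely at the initial vertex, which follows because the identity ``$\mu_k(DB) = D\,\mu_k(B)$ in the $(\bfd,\bfz)$-sense'' is an identity of matrices valid regardless of the starting matrix. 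With that in hand the proof is a one-line invocation of uniqueness of solutions to the recurrence. Finally, one remarks that this also makes the $C$-matrices of the two patterns literally equal, which is the content of Proposition \ref{prop:c1}.
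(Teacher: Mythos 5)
Your argument is correct and is essentially the paper's own proof: the paper establishes Proposition \ref{prop:c1} by observing that the recurrence \eqref{eq:cmut1}, with initial condition $c^{t_0}_{ij}=\delta_{ij}$, is literally the ordinary $c$-vector recurrence of \cite[Proposition 5.8]{Fomin02} applied to the exchange matrices $DB^t$, which (as noted after Definition \ref{defn:mut1}) mutate by the ordinary rule. Your extra care about checking $\mu_k(DB^t)=DB^{t'}$ along every edge is sound but already covered by that remark in the text.
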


Alternatively, one can relate these $c$-vectors with
 the $c$-vectors
of the ordinary cluster pattern with principal coefficients and
  initial seed $(\bfx,\bfy,BD)$ as follows.
Let us introduce
  \begin{align}
  \label{eq:ct1}
  \tilde{c}^t_{ij} = d^{-1}_i c^t_{ij} d_j.
  \end{align}
  Then, $ \tilde{c}_{ij}^{t_0} = \delta_{ij}$, and \eqref{eq:cmut1} is rewritten as
\begin{align}
\label{eq:ctmut1}
\tilde{c}^{t'}_{ij}=
\begin{cases}
-\tilde{c}^{t}_{ik} & j=k\\
\tilde{c}^{t}_{ij} +\tilde{c}^t_{ik}[\ve b^t_{kj}d_j]_+
+[-\ve \tilde{c}^t_{ik}]_+ b^t_{kj}d_j
&j\neq k.
\end{cases}
\end{align}

Therefore, we have the following result.

\begin{prop}
\label{prop:ct1}
 The   $\tilde{c}$-vectors, which are the column vectors in \eqref{eq:ct1},
of the $(\bfd,\bfz)$-cluster pattern with principal coefficients
and  initial seed $(\bfx,\bfy,B)$
coincide with
 the $c$-vectors
of the ordinary cluster pattern with principal coefficients and
  initial seed $(\bfx,\bfy,BD)$.
\end{prop}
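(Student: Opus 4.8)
The plan is to deduce the statement directly from the recurrence \eqref{eq:ctmut1}, which has already been written down, by identifying it with the defining recurrence for the $c$-vectors of the ordinary cluster pattern with principal coefficients and initial seed $(\bfx,\bfy,BD)$; this is the $BD$-counterpart of Proposition~\ref{prop:c1}.

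First I would note that this ordinary cluster pattern is well defined and keep track of its exchange matrices: $BD$ is skew-symmetrizable since $B$ is, and, as observed right after Definition~\ref{defn:mut1}, the generalized $B$-mutation \eqref{eq:bmut1} agrees with the ordinary mutation on the matrices $B_tD$. Hence the exchange matrix at $t$ of the ordinary pattern with initial seed $(\bfx,\bfy,BD)$ is exactly $B_tD=(b^t_{kj}d_j)_{k,j}$ along every path in $\bbT_n$.

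Next I would check the two defining properties of the $\tilde c$-vectors. The initial condition $\tilde c^{t_0}_{ij}=d_i^{-1}\delta_{ij}d_j=\delta_{ij}$ is immediate. For the recurrence, substitute $c^t_{ik}=d_i\,\tilde c^t_{ik}\,d_k^{-1}$ into \eqref{eq:cmut1} and multiply both sides by $d_i^{-1}d_j$; moving the positive scalars $d_i^{-1},d_j,d_k^{\pm1}$ in and out of the brackets via $\alpha[x]_+=[\alpha x]_+$ for $\alpha>0$ turns \eqref{eq:cmut1} into \eqref{eq:ctmut1} for $j\neq k$, while the case $j=k$ reduces to $\tilde c^{t'}_{ik}=-\tilde c^t_{ik}$. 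This is the only computational step, and it is routine; the sole point requiring a little care is the bookkeeping of these positive scalars inside $[\,\cdot\,]_+$.

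Finally, comparing \eqref{eq:ctmut1} with the $c$-vector recurrence of \cite[Proposition 5.8]{Fomin02} applied to the ordinary pattern with exchange matrices $B_tD$ (so that $b^t_{kj}d_j=(B_tD)_{kj}$), the two recurrences and their initial conditions are literally the same; by induction on the distance from $t_0$ in $\bbT_n$ the $\tilde c$-vectors of the $(\bfd,\bfz)$-cluster pattern coincide with the $c$-vectors of that ordinary pattern, which in particular shows that the a priori rational numbers $d_i^{-1}c^t_{ij}d_j$ are integers. I do not anticipate a genuine obstacle: everything is forced by the already-derived recurrence \eqref{eq:ctmut1}, and the only thing to be slightly careful about is that the exchange-matrix side also matches under the dictionary $B_t\leftrightarrow B_tD$, which is precisely the remark following Definition~\ref{defn:mut1}.
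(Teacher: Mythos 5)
Your proposal is correct and follows exactly the paper's route: the paper's entire argument consists of deriving the recurrence \eqref{eq:ctmut1} from \eqref{eq:cmut1} via the substitution \eqref{eq:ct1} and observing that it is literally the ordinary $c$-vector recurrence for the exchange matrices $B_tD$ (using the remark after Definition~\ref{defn:mut1} that these mutate ordinarily). Your extra remarks on the bracket bookkeeping and the resulting integrality of $d_i^{-1}c^t_{ij}d_j$ are accurate but not a different method.
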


We need this alternative description for the description of the $g$-vectors below.

\subsubsection{$F$-polynomials}
Thanks to Proposition \ref{prop:poly1},
the following definition makes sense.
\begin{defn}
Let $\Sigma$ be a $(\bfd,\bfz)$-cluster pattern with principal coefficients.
For each $t\in \bbT_n$ and $i=1,\dots,n$,
a polynomial $F^t_i(\bfy,\bfz)\in \mathbb{Z}[\bfy,\bfz]$
is defined by the specialization of the $X$-function $X^t_i(\bfx,\bfy,\bfz)$
of $\Sigma$ with
$x_1=\cdots = x_n =1$. 
They are called the {\em $F$-polynomials\/} of $\Sigma$.
\end{defn}

%

The following mutation/recurrence formula provides a combinatorial
description of $F$-polynomials.

\begin{prop}[{cf.~\cite[Proposition 5.1]{Fomin07}}]  The $F$-polynomials
for a $(\bfd,\bfz)$-cluster pattern with principal coefficients
satisfy the following recurrence relation:
\par
{(initial condition)}
\begin{align}
 F_i^{t_0}=1,
\end{align}
(recurrence relation)  for $t {\buildrel k \over -} t'$,
\begin{align}
\label{eq:Fmut1}
F^{t'}_{i}=
\begin{cases}
\displaystyle
F^t_k{}^{-1}
\Biggl(\prod_{j=1}^n 
y_j^{[-\ve c^t_{jk}]_+}
F^{t}_j{}^{[-\ve {b}^t_{jk}]_+}
\Biggr)^{d_k}
\displaystyle
\sum_{s=0}^{d_k}
z_{k,s} 
\Biggl(\prod_{j=1}^n 
y_j^{\ve c^t_{jk}}
F^{t}_j{}^{\ve{b}^t_{jk}}\Biggr)^{ s}
& i=k\\
F^t_i&i\neq k,
\end{cases}
\end{align}
where $\ve=\pm1$ and it is independent of the choice of the sign $\ve$.
\end{prop}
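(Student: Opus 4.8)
The plan is to mimic the proof of \cite[Proposition 5.1]{Fomin07}, deriving the $F$-polynomial recurrence by specializing the $X$-function recurrence at $x_1=\cdots=x_n=1$. First I would recall that the $X$-functions, being the $x$-variables of the $(\bfd,\bfz)$-cluster pattern with principal coefficients in $\bbP=\mathrm{Trop}(\bfy,\bfz)$, satisfy the mutation \eqref{eq:xmut1}. The initial condition $F_i^{t_0}=1$ is immediate, since $X_i^{t_0}(\bfx,\bfy,\bfz)=x_i$ specializes to $1$. The $i\neq k$ case is equally trivial because $x_i'=x_i$ under $\mu_k$. So the content is the $i=k$ case.

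For that, the key step is to track the specialization of each factor on the right side of \eqref{eq:xmut1}. Writing $\hat y_j^t = y_j^t \prod_\ell (x_\ell^t)^{b_{\ell j}^t}$, I would use the standard separation-of-additions identity (as in \cite[Proposition 3.6, Corollary 6.3]{Fomin07}): each $x$-variable factors, under the substitution $x_\ell\mapsto 1$, in a way compatible with the $\hat y$-dynamics. Concretely, one shows by simultaneous induction on the distance from $t_0$ that $X_j^t(\bfx,\bfy,\bfz)\big|_{\bfx=1}=F_j^t(\bfy,\bfz)$ and that $\hat y_j^t$, evaluated so that $x_\ell\mapsto X_\ell^t$ is replaced by $1$ in the $x$-part while keeping the $y_i$ formal, reduces to $\prod_i y_i^{c_{ij}^t}\cdot F_j^t(\bfy,\bfz)^{\,?}$ — more precisely, the quantity $\prod_j (x_j^t)^{b_{jk}^t}$ specializes to $\prod_j F_j^t(\bfy,\bfz)^{b_{jk}^t}$, and $y_k^t$ specializes (in $\mathrm{Trop}$) to $\prod_j y_j^{c_{jk}^t}$ by \eqref{eq:yc1}. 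Plugging into \eqref{eq:xmut1}: the prefactor $\bigl(\prod_j (x_j^t)^{[-\ve b_{jk}^t]_+}\bigr)^{d_k}$ becomes $\bigl(\prod_j F_j^t{}^{[-\ve b_{jk}^t]_+}\bigr)^{d_k}$; the denominator $\bigoplus_{s=0}^{d_k} z_{k,s}(y_k^t)^{\ve s}$ is a tropical expression in $\bfy$ alone, and by the tropical identity \eqref{eq:trop1} it equals $\bigl(\prod_j y_j^{[-\ve c_{jk}^t]_+}\bigr)^{-d_k}$, i.e.\ its reciprocal contributes the factor $\bigl(\prod_j y_j^{[-\ve c_{jk}^t]_+}\bigr)^{d_k}$; and the numerator $\sum_{s=0}^{d_k} z_{k,s}(\hat y_k^t)^{\ve s}$ specializes to $\sum_{s=0}^{d_k} z_{k,s}\bigl(\prod_j y_j^{\ve c_{jk}^t} F_j^t{}^{\ve b_{jk}^t}\bigr)^s$. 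Combining these with $x_k^{-1}\mapsto F_k^t{}^{-1}$ gives exactly \eqref{eq:Fmut1}.

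The independence of the sign $\ve$ follows from the already-established $\ve$-independence of \eqref{eq:xmut1} together with the reciprocity \eqref{eq:p1}; alternatively it can be checked directly from \eqref{eq:Fmut1} using $[a]_+ = [-a]_+ + a$ and $z_{k,s}=z_{k,d_k-s}$, reindexing $s\mapsto d_k-s$ in the sum. The one genuinely delicate point — and the step I expect to be the main obstacle — is justifying that the naive term-by-term specialization $\bfx\mapsto 1$ commutes with the mutation, i.e.\ that no cancellation or ill-defined substitution occurs. This is exactly where Proposition \ref{prop:poly1} is needed: because $X_j^t\in\mathbb{Z}[\bfx^{\pm1},\bfy,\bfz]$ has nonzero constant term in each of $\bfy,\bfz$ and, crucially, the normalized coefficients $p_{k,s}$ have no common factor, setting $\bfx=1$ is a legitimate ring homomorphism and the inductive identity $X_j^t|_{\bfx=1}=F_j^t$ propagates cleanly through \eqref{eq:xmut1}. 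I would state this compatibility as a lemma (parallel to the "separation formula" setup in \cite{Fomin07}) and then the recurrence \eqref{eq:Fmut1} drops out by the computation above.
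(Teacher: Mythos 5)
Your proposal is correct and follows essentially the same route as the paper: specialize the $x$-mutation \eqref{eq:xmut1} to principal coefficients (writing $\hat y^t_k$ via \eqref{eq:yc1} as $\prod_j y_j^{c^t_{jk}}X^t_j{}^{b^t_{jk}}$), set $x_1=\cdots=x_n=1$, and convert the tropical denominator into the monomial prefactor via \eqref{eq:trop1}. The extra care you take about the legitimacy of the specialization (via Proposition \ref{prop:poly1}) is sound, though the paper takes it for granted.
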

\begin{proof}
By specializing the mutation \eqref{eq:xmut1} with $\bbP=\mathrm{Trop}(\bfy, \bfz)$,
we obtain
\begin{align}
\label{eq:Xmut1}
X^{t'}_i&=
\begin{cases}
\displaystyle
X^t_k{}^{-1}
\Biggl(\prod_{j=1}^n 
X^{t}_j{}^{[-\ve {b}^t_{jk}]_+}
\Biggr)^{d_k}
\frac{\displaystyle
\sum_{s=0}^{d_k}
z_{k,s} 
\Biggl(\prod_{j=1}^n 
y_j^{\ve c^t_{jk}}
X^{t}_j{}^{\ve {b}^t_{jk}}\Biggr)^{ s}
}
{\displaystyle
\bigoplus_{s=0}^{d_k}
\Biggl(\prod_{j=1}^n 
y_j^{\ve c^t_{jk}}
\Biggr)^{ s}
}
 & i=k\\
X^t_i& i\neq k.\\
\end{cases}
\end{align}
Then, specializing it with $x_1=\dots x_n=1$,
and using \eqref{eq:trop1},
we obtain \eqref{eq:Fmut1}.
\end{proof}

\subsubsection{$G$-matrices and $g$-vectors}
Let $\Sigma$ be the $(\bfd,\bfz)$-cluster pattern with principal coefficients
and initial seed $(\bfx,\bfy,B)$.
Let $\mathbb{Z}[\bfx^{\pm1},\bfy,\bfz]$ be the one
in
Proposition \ref{prop:poly1}.
Following \cite{Fomin07}, we introduce a $\bbZ^n$-grading in
$\mathbb{Z}[\bfx^{\pm1},\bfy,\bfz]$ as follows:
\begin{align}
\deg(x_i) = \bfe_i,
\quad
\deg(y_i) = -{\bfb}_j,
\quad
\deg(z_{i,r})=0.
\end{align}
Here, $\bfe_i$ is the $i$th unit vector  of
$\bbZ^n$, and ${\bfb}_j=\sum_{i=1}^n {b}_{ij} \bfe_i$
is the $j$th column of the initial matrix $B=(b_{ij})_{i,j=1}^n$.
Note that $\deg(\hat{y}_i)=0$  by \eqref{eq:yhat1}.

\begin{prop}[{cf. \cite[Proposition 6.1]{Fomin07}}]
\label{prop:gvec1}
The $X$-functions  are homogeneous with respect to the 
$\bbZ^n$-grading.
\end{prop}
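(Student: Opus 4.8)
The plan is to prove homogeneity of each $X$-function $X^t_i(\bfx,\bfy,\bfz)$ by induction on the distance between $t$ and $t_0$ in $\bbT_n$, exactly parallel to the proof of \cite[Proposition 6.1]{Fomin07}. For the base case, at $t=t_0$ we have $X^{t_0}_i=x_i$, which is homogeneous of degree $\bfe_i$. For the inductive step, suppose $t \buildrel k \over - t'$ and that all $X^t_j$ are homogeneous; we must check that the right-hand side of the mutation formula \eqref{eq:Xmut1} for $X^{t'}_k$ is homogeneous, the case $i\neq k$ being trivial since $X^{t'}_i=X^t_i$.

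The key computation is to show that every monomial summand $z_{k,s}\bigl(\prod_j y_j^{\ve c^t_{jk}} X^t_j{}^{\ve b^t_{jk}}\bigr)^s$ in the numerator of \eqref{eq:Xmut1} has the same $\bbZ^n$-degree, independent of $s$. The crucial point is that the combined variable $\prod_j y_j^{c^t_{jk}} X^t_j{}^{b^t_{jk}}$ is the ``$\hat y$-type'' object at $t$, and one expects it to have degree $0$. Indeed, using $\deg(y_j)=-\bfb_j$ and writing $\gamma^t_k:=\deg(X^t_k)$ for the (inductively known) degree vector of $X^t_k$, the degree of that combined monomial is $-\sum_j c^t_{jk}\bfb_j + \sum_j b^t_{jk}\gamma^t_j$. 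Here I would invoke the standard identity relating $C$-matrices, $G$-matrices, and $B$-matrices --- which in the present setting is governed by Proposition \ref{prop:c1} (equivalently, the fact that the $c$-vectors are those of the ordinary pattern with initial matrix $DB$) --- to conclude this expression vanishes; this is the generalized-cluster analogue of the ordinary identity used in \cite{Fomin07}. Since $\deg(z_{k,s})=0$, it then follows that each summand, and hence the entire numerator, is homogeneous of a common degree, say $d_k$ times that of $\prod_j y_j^{[-\ve c^t_{jk}]_+}$-type corrections plus contributions from the prefactor; the denominator is a tropical (hence monomial) object whose evaluation contributes a definite degree; and the whole right-hand side of \eqref{eq:Xmut1} is therefore homogeneous. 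One then reads off $\deg(X^{t'}_k)$ and records it as the $k$th $g$-vector $g^{t'}_k$, setting up the inductive hypothesis for the next step.

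The main obstacle I anticipate is bookkeeping the degree of the prefactor $X^t_k{}^{-1}\bigl(\prod_j X^t_j{}^{[-\ve b^t_{jk}]_+}\bigr)^{d_k}$ together with the denominator $\bigoplus_s(\prod_j y_j^{\ve c^t_{jk}})^s$, and checking that the resulting $g$-vector mutation rule is consistent and $\ve$-independent. In particular one must verify that the two choices $\ve=\pm1$ give the same degree; this follows, as in \cite{Fomin07}, from the tropical identity \eqref{eq:trop1} applied at the level of degrees, together with the fact that $\deg$ of the combined $\hat y$-monomial is $0$ so that the two naively different expressions differ by $d_k\cdot 0$. The mutation degrees $d_k\geq 1$ enter only as overall multiplicities and cause no genuine difficulty: the homogeneity argument is insensitive to the polynomial (rather than binomial) nature of the exchange relation, precisely because the $z_{k,s}$ are degree $0$ and the $\hat y$-type monomial is degree $0$.

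Since the statement of Proposition \ref{prop:gvec1} as given only asserts homogeneity (the explicit $g$-vector recurrence presumably appearing in a subsequent result), the proof can be kept short: establish the base case, then in the inductive step display \eqref{eq:Xmut1}, note $\deg(z_{k,s})=0$, invoke that the $\hat y$-type combined monomial $\prod_j y_j^{c^t_{jk}}X^t_j{}^{b^t_{jk}}$ has degree $0$ (by Proposition \ref{prop:c1} and the induction hypothesis, exactly as $\deg(\hat y_i)=0$ for the initial seed), and conclude that numerator, denominator, and prefactor are each homogeneous, hence so is $X^{t'}_k$. I would remark that this is the verbatim analogue of \cite[Proposition 6.1]{Fomin07}, the only new input being the degree-$0$ status of the frozen coefficients $\bfz$.
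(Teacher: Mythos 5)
Your inductive skeleton is the right one, and you have correctly isolated the crux: every summand $z_{k,s}\bigl(\prod_j y_j^{\ve c^t_{jk}}X^t_j{}^{\ve b^t_{jk}}\bigr)^s$ in the numerator of \eqref{eq:Xmut1} must have one and the same degree, which reduces to showing that the combined monomial $\prod_j y_j^{c^t_{jk}}X^t_j{}^{b^t_{jk}}$ has degree $0$. The gap is in how you justify that claim. You propose to invoke the identity $\sum_j b^t_{jk}\deg(X^t_j)=\sum_j c^t_{jk}\bfb_j$ (a $BC^t=G^tB^t$-type identity) and attribute it to Proposition \ref{prop:c1}. But Proposition \ref{prop:c1} only identifies the $c$-vectors with those of the ordinary pattern for $DB$; it says nothing about degrees of $X$-functions or about $g$-vectors. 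Worse, in the paper's (and in \cite{Fomin07}'s) logical order such $C$--$G$--$B$ identities are \emph{consequences} of the homogeneity you are trying to prove --- the $g$-vector recurrence \eqref{eq:gmut1} is itself derived by comparing degrees in \eqref{eq:Xmut1}, which presupposes Proposition \ref{prop:gvec1} --- so appealing to it here is circular unless you run a separate simultaneous induction establishing the identity purely combinatorially, which you do not do.

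The non-circular mechanism, and the one the paper uses, is the separation formula for $\hat y$-variables: by the proposition stating that the $\hat y$-variables mutate in the same way as the $y$-variables, combined with Lemma \ref{lem:yeval1} (universality of $\bbQ_{\mathrm{sf}}(\bfy,\bfz)$), one has $\hat y^t_k=Y^t_k\vert_{\calF}(\hat{\bfy},\bfz)$. Thus the combined monomial is a subtraction-free rational function of $\hat y_1,\dots,\hat y_n$ and $\bfz$, all of which have degree $0$, hence it has degree $0$ with no matrix identity needed; this is exactly how the numerator is rewritten in \eqref{eq:Xmut2}. If you replace your appeal to Proposition \ref{prop:c1} by this observation, the rest of your argument (base case, $\deg(z_{k,s})=0$, bookkeeping of the prefactor and of the tropical denominator, and the $\ve$-independence via \eqref{eq:trop1}) goes through and coincides with the paper's proof.
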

\begin{proof}
We repeat the same argument in \cite[Proposition 6.1]{Fomin07}
using the induction on the distance  between $t$ and  $t_0$ in $\bbT_n$. 
Using  \eqref{eq:xmut1} and Lemma \ref{lem:yeval1}
specialized to a $(\bfd,\bfz)$-cluster pattern with principal coefficients, we have
\begin{align}
\label{eq:Xmut2}
X^{t'}_i&=
\begin{cases}
\displaystyle
X^t_k{}^{-1}
\Biggl(\prod_{j=1}^n 
X^{t}_j{}^{[-\ve {b}^t_{jk}]_+}
\Biggr)^{d_k}
\frac{\displaystyle
\sum_{s=0}^{d_k}
z_{k,s} Y^t_k{}^{\ve s}\vert_{\calF}
(\hat{\bfy},\bfz)
}
{\displaystyle
\bigoplus_{s=0}^{d_k}
z_{k,s} {Y}^t_k{}^{\ve s}\vert_{\mathrm{Trop}(\bfy,\bfz)}
(\bfy,\bfz)
}
 & i=k\\
X^t_i& i\neq k.\\
\end{cases}
\end{align}
Then, the right hand side is homogeneous due to the induction hypothesis.
\end{proof}

\begin{defn}
Let $\Sigma$ be the $(\bfd,\bfz)$-cluster pattern with principal coefficients
and initial matrix $(\bfx,\bfy,B)$.
Thanks to Proposition \ref{prop:gvec1},
 the degree vector $\deg(X^t_i)$ of each $X$-function $X^t_i$
 of $\Sigma$ is defined.
 Let us express it as
\begin{align}
\deg(X^t_j)=\sum_{i=1}^n g^t_{ij}\bfe_i.
\end{align}
The resulting matrices $G^t=(g^t_{ij})_{i,j=1}^n$ 
and their column vectors $g^t_{j}=(g^t_{ij})_{i=1}^n$
are called
the {\em $G$-matrices\/} and
the {\em $g$-vectors\/} of $\Sigma$,
respectively.
\end{defn}

The following mutation/recurrence formula provides a combinatorial
description of $g$-vectors.

\begin{prop} The $g$-vectors
of the $(\bfd,\bfz)$-cluster pattern with principal coefficients
and initial seed $(\bfx,\bfy,B)$
satisfy the following recurrence relation:
\par
(initial condition)
\begin{align}
 g_{ij}^{t_0} = \delta_{ij},
\end{align}
(recurrence relation)  for $t {\buildrel k \over -} t'$,
\begin{align}
\label{eq:gmut1}
g^{t'}_{ij}=
\begin{cases}
\displaystyle
-g^{t}_{ik}
+\sum_{\ell=1}^n g^t_{i\ell} [-\ve {b}^t_{\ell k}d_k]_+ 
-\sum_{\ell=1}^n {b}_{i\ell}[-\ve c^t_{\ell k}d_k]_+
& j=k\\
g^{t}_{ij}
&j\neq k,
\end{cases}
\end{align}
where $\ve=\pm1$ and it is independent of the choice of the sign $\ve$.
\end{prop}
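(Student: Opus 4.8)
The plan is to mirror the proof of \cite[Proposition 6.1]{Fomin07} (or rather of the corresponding $g$-vector recurrence therein), working directly from the homogeneity established in Proposition \ref{prop:gvec1}. First I would take the degree of both sides of the $X$-function mutation formula \eqref{eq:Xmut1} (equivalently \eqref{eq:Xmut2}) in the case $i=k$; since each $X$-function is homogeneous by Proposition \ref{prop:gvec1}, every factor on the right-hand side has a well-defined degree, so it suffices to add these up. The case $j\neq k$ is immediate from $X^{t'}_j=X^t_j$, giving $g^{t'}_{ij}=g^t_{ij}$. For $j=k$, the contribution of $X^t_k{}^{-1}$ is $-g^t_{\cdot k}$, and the contribution of $\bigl(\prod_j X^t_j{}^{[-\ve b^t_{jk}]_+}\bigr)^{d_k}$ is $\sum_{\ell} g^t_{\cdot \ell}[-\ve b^t_{\ell k}]_+ d_k$, which accounts for the first two terms of \eqref{eq:gmut1}. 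The remaining term must then come from the degree of the rational factor $\bigl(\sum_s z_{k,s}\hat y_k^{\ve s}\bigr)\big/\bigl(\bigoplus_s z_{k,s} y_k^{\ve s}\bigr)$.

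The key point is that this last rational factor is homogeneous of degree $0$ whenever $\ve c^t_{\cdot k}$ has a component of one fixed sign, but in general its degree is a ``correction'' term. More precisely, recall $\deg(\hat y_i)=0$ and $\deg(y_i)=-\bfb_i$; using the $F$-polynomial description and \eqref{eq:trop1}, the numerator $\sum_{s=0}^{d_k} z_{k,s}\hat y_k^{\ve s}$ is (after substituting $\hat y_j = y_j\prod_i x_i^{b_{ij}}$, though here I should be careful: we are in the principal-coefficient pattern, so $\hat y^t_k$ expands as a product involving the $Y$-functions, hence involving $y_i^{c^t_{ik}}$ and $x$'s), and the lowest-degree monomial of the Laurent expansion in $\bfx$, together with the fact that $F$-polynomials have constant term $1$, pins down the degree. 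Concretely, the degree of the numerator is $d_k$ times the degree of the ``leading'' term, which combined with the tropical denominator $\bigoplus_s (\prod_j y_j^{\ve c^t_{jk}})^s$ of degree $-\sum_\ell b_{i\ell}[-\ve c^t_{\ell k}]_+ d_k$ in the $x$-grading (via $\deg y_j = -\bfb_j$ and \eqref{eq:trop1}) produces exactly the third term $-\sum_{\ell} b_{i\ell}[-\ve c^t_{\ell k} d_k]_+$ of \eqref{eq:gmut1}.

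Thus the cleanest route is: (i) observe $\deg\bigl(\bigoplus_{s=0}^{d_k} z_{k,s} y_k^{\ve s}\bigr) = d_k \min(0,\ve \deg y_k) = -d_k\sum_\ell b_{i\ell}[-\ve c^t_{\ell k}]_+$ wait — more carefully, by \eqref{eq:trop1} the tropical denominator equals $\bigl(\prod_j y_j^{[-\ve c^t_{jk}]_+}\bigr)^{d_k}$, whose degree is $-d_k\sum_j [-\ve c^t_{jk}]_+ \bfb_j$, i.e. its $i$-th component is $-d_k\sum_\ell b_{i\ell}[-\ve c^t_{\ell k}]_+$; (ii) observe that the numerator $\sum_s z_{k,s}\hat y_k^{\ve s}$, being (after the substitution defining $\hat y$) a polynomial with constant term $z_{k,0}=1$ in the variables of positive $\bfx$-degree or else monomial-homogenizable, has degree equal to that of its extreme term, which by a short computation with $\deg\hat y_k=0$ and the $Y$-function/$c$-vector identification is $0$ — so in fact the numerator contributes degree $0$ in the principal case. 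Hence the degree of the whole rational factor is $+d_k\sum_\ell b_{i\ell}[-\ve c^t_{\ell k}]_+ = \sum_\ell b_{i\ell}[-\ve c^t_{\ell k}d_k]_+$; adding the three contributions and taking the component form gives \eqref{eq:gmut1}. I would then note sign-independence follows from the sign-independence already proved for \eqref{eq:xmut1} and \eqref{eq:cmut1}, together with \eqref{eq:p1}.

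\textbf{Main obstacle.} The delicate step is item (ii): computing the degree of the numerator $\sum_{s=0}^{d_k} z_{k,s}\hat y^t_k{}^{\ve s}$ as a $\bbZ^n$-homogeneous element. One must argue that, although $\hat y^t_k$ is itself degree-$0$ homogeneous by Proposition \ref{prop:gvec1} (since $\deg X^t_j$ are defined and $\hat y^t_j = y_j\prod X^t_{\ell}{}^{b^t_{\ell j}}$ has degree $0$ just as in the initial case — this requires the analogue of $\deg(\hat y_i)=0$ at vertex $t$, which itself needs a small lemma), the whole numerator is homogeneous of degree $0$; this is automatic once each $\hat y^t_k{}^{\ve s}$ has degree $0$ and $\deg z_{k,s}=0$. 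So the real work is establishing $\deg(\hat y^t_j)=\mathbf{0}$ for all $t$, which I expect to follow by the same induction as Proposition \ref{prop:gvec1} but should be spelled out. Once that is in hand, the rest is bookkeeping with $[\,\cdot\,]_+$ identities and the observation $[a]_+ d = [ad]_+$ for $d>0$.
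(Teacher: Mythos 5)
Your route---taking degrees of both sides of \eqref{eq:Xmut1} and using the homogeneity from Proposition \ref{prop:gvec1}---is exactly the paper's proof, which consists of that single sentence; so the approach is right, but two points in your bookkeeping need repair. First, there is a sign slip in your step (i): by \eqref{eq:trop1} it is the \emph{reciprocal} of the tropical denominator that equals $\bigl(\prod_j y_j^{[-\ve c^t_{jk}]_+}\bigr)^{d_k}$, not the denominator itself. Since that monomial has degree $-d_k\sum_j[-\ve c^t_{jk}]_+\bfb_j$, dividing by the denominator contributes $-\sum_\ell b_{i\ell}[-\ve c^t_{\ell k}d_k]_+$ to the $i$-th component, which is the third term of \eqref{eq:gmut1}; your computation lands on the opposite sign $+\sum_\ell b_{i\ell}[-\ve c^t_{\ell k}d_k]_+$, which does not match the statement. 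Second, the ``main obstacle'' you flag dissolves: you do not need a separate inductive lemma that $\deg(\hat y^t_k)=\mathbf{0}$. The numerator $\sum_{s=0}^{d_k} z_{k,s}\hat y^t_k{}^{\ve s}$ is a ratio of homogeneous elements (namely $X^{t'}_k$, $X^t_k$, the product $\bigl(\prod_j X^t_j{}^{[-\ve b^t_{jk}]_+}\bigr)^{d_k}$, and the monomial denominator), hence homogeneous, and its $s=0$ term is $z_{k,0}=1$, of degree $\mathbf{0}$; therefore the numerator has degree $\mathbf{0}$ outright (and $\deg(\hat y^t_k)=\mathbf{0}$ follows as a byproduct rather than being a prerequisite). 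With these two corrections your argument is complete and identical in substance to the paper's.
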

\begin{proof}
This is obtained by comparing the degrees of
both  sides of \eqref{eq:Xmut1}.
\end{proof}

By using  the $\tilde{c}$-vectors in
\eqref{eq:ct1},
the relation \eqref{eq:gmut1} is rewritten as follows.
\begin{align}
\label{eq:gmut2}
g^{t'}_{ij}=
\begin{cases}
\displaystyle
-g^{t}_{ik}
+\sum_{\ell=1}^n g^t_{i\ell} [-\ve {b}^t_{\ell k}d_k]_+ 
-\sum_{\ell=1}^n {b}_{i\ell}d_{\ell}[-\ve \tilde{c}^t_{\ell k}]_+
& j=k\\
g^{t}_{ij}
&j\neq k.
\end{cases}
\end{align}

Having Proposition \ref{prop:ct1} in mind,
we observe that this relation
 coincides with the one for the $g$-vectors
of the ordinary cluster pattern with principal coefficients
and  initial seed $(\bfx,\bfy,BD)$ in
\cite[Proposition 6.6]{Fomin07}.
Therefore, we have the following result.

\begin{prop}
\label{prop:g1}
 The $g$-vectors
of the $(\bfd,\bfz)$-cluster pattern with principal coefficients
and  initial seed $(\bfx,\bfy,B)$
coincide with the $g$-vectors
of the ordinary cluster pattern with principal coefficients and
  initial seed $(\bfx,\bfy,BD)$.
\end{prop}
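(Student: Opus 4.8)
The plan is to argue exactly as the paper itself signals in the two sentences preceding the statement: reduce the claim to the corresponding mutation/recurrence characterizations. First I would recall that, by the preceding proposition, the $g$-vectors of the $(\bfd,\bfz)$-cluster pattern with principal coefficients and initial seed $(\bfx,\bfy,B)$ satisfy the recurrence \eqref{eq:gmut1}, equivalently \eqref{eq:gmut2} after rewriting via the $\tilde{c}$-vectors \eqref{eq:ct1}, together with the initial condition $g^{t_0}_{ij}=\delta_{ij}$. The strategy is then to observe that \eqref{eq:gmut2} is literally the recurrence of \cite[Proposition 6.6]{Fomin07} for the ordinary cluster pattern with principal coefficients and initial exchange matrix $BD$, provided one reads the symbol ``$c^t_{\ell k}$'' appearing there as the $c$-vectors of that ordinary pattern.

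The key step is to check that this identification of inputs is legitimate, i.e.\ that the $\tilde{c}^t_{\ell k}$ occurring in \eqref{eq:gmut2} really are the $c$-vectors of the ordinary cluster pattern with principal coefficients and initial seed $(\bfx,\bfy,BD)$. But that is precisely the content of Proposition \ref{prop:ct1}, which I would invoke directly. Likewise, the matrices $B^t$ appearing in \eqref{eq:gmut2} are the exchange matrices of the $(\bfd,\bfz)$-cluster pattern, and by the remark following Definition \ref{defn:mut1} the mutation \eqref{eq:bmut1} coincides with the ordinary mutation of the matrix $BD$; hence the ``$b^t_{\ell k}d_k$'' terms match the corresponding ordinary-pattern data as well. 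With both the initial data and the recurrence coefficients matched, a trivial induction on the distance from $t_0$ in $\bbT_n$ shows that the two families of $g$-vectors agree at every vertex $t$.

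Concretely, the proof reads: by \eqref{eq:gmut2} and Proposition \ref{prop:ct1}, the $g$-vectors in question obey the same initial condition and the same recurrence as the $g$-vectors of the ordinary cluster pattern with principal coefficients and initial seed $(\bfx,\bfy,BD)$ in \cite[Proposition 6.6]{Fomin07}; since such a recurrence with given initial condition has a unique solution along $\bbT_n$, the two agree. I would keep this at the level of two or three sentences, mirroring the brevity of Propositions \ref{prop:c1} and \ref{prop:ct1}.

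The only point requiring a little care — and thus the main (mild) obstacle — is bookkeeping in the matrix conventions: one must make sure that the recurrence \eqref{eq:gmut2}, written in terms of $B^t$, $d_k$, $d_\ell$, and $\tilde{c}^t_{\ell k}$, is transcribed into the precise form of \cite[Proposition 6.6]{Fomin07} with no transposition error or index mismatch, given that in \cite{Fomin07} the $g$-vector recurrence is stated for a particular sign convention on the exchange matrix. Since the paper has already aligned its conventions with those of \cite{Fomin07} (and \eqref{eq:gmut2} was derived with exactly this comparison in view), this is a routine verification rather than a genuine difficulty, and the argument is otherwise immediate from the results already established.
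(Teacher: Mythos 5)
Your proposal matches the paper's own argument: the paper likewise derives the recurrence \eqref{eq:gmut1}, rewrites it via the $\tilde{c}$-vectors as \eqref{eq:gmut2}, and then identifies it (using Proposition \ref{prop:ct1}) with the $g$-vector recurrence of \cite[Proposition 6.6]{Fomin07} for the initial matrix $BD$. Your added remarks on matching the exchange-matrix data and on uniqueness of the recurrence solution are correct and merely make explicit what the paper leaves implicit.
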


For the sake of completeness, let us also present the counterpart of
Proposition \ref{prop:ct1}.
Let us introduce
\begin{align}
\label{eq:gt1}
\tilde{g}^t_{ij}=  d_i g^t_{ij}d_j^{-1}.
\end{align}
Then, the relation \eqref{eq:gmut1} is also rewritten as
\begin{align}
\label{eq:gmut3}
\tilde{g}^{t'}_{ij}=
\begin{cases}
\displaystyle
-\tilde{g}^{t}_{ik}
+\sum_{\ell=1}^n \tilde{g}^t_{i\ell} [-\ve d_{\ell}b^t_{\ell k}]_+ 
-\sum_{\ell=1}^n d_i b_{i\ell}[-\ve c^t_{\ell k}]_+
& j=k\\
\tilde{g}^{t}_{ij}
&j\neq k.
\end{cases}
\end{align}
Having Proposition \ref{prop:c1} in mind,
we observe that this relation
coincides with the one for the $g$-vectors 
of the ordinary cluster pattern with principal coefficients
and  initial seed $(\bfx,\bfy,DB)$.
Therefore, we have the following result.

\begin{prop}
\label{prop:gt1}
 The $\tilde{g}$-vectors,
which are the column vectors in \eqref{eq:gt1},
of the $(\bfd,\bfz)$-cluster pattern with principal coefficients
and  initial seed $(\bfx,\bfy,B)$
coincide with the $g$-vectors
of the ordinary cluster pattern with principal coefficients and
  initial seed $(\bfx,\bfy,DB)$.
\end{prop}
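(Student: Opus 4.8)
The plan is to derive the rewriting of \eqref{eq:gmut1} into \eqref{eq:gmut3} by a direct substitution, and then to identify the resulting recurrence with the known $g$-vector recurrence for the ordinary cluster pattern with initial seed $(\bfx,\bfy,DB)$, invoking Proposition~\ref{prop:c1} to match the $c$-vector data. First I would take the defining substitution $\tilde{g}^t_{ij}= d_i g^t_{ij}d_j^{-1}$ and multiply \eqref{eq:gmut1} through by the appropriate powers of the $d_i$. For $j=k$ the relation \eqref{eq:gmut1} reads
\begin{align*}
g^{t'}_{ik}=-g^{t}_{ik}+\sum_{\ell=1}^n g^t_{i\ell}[-\ve b^t_{\ell k}d_k]_+-\sum_{\ell=1}^n b_{i\ell}[-\ve c^t_{\ell k}d_k]_+,
\end{align*}
so multiplying by $d_i d_k^{-1}$ and inserting $1=d_\ell^{-1}d_\ell$ inside each sum converts $d_i g^t_{i\ell}$ into $\tilde{g}^t_{i\ell}d_\ell$ in the first sum and $d_i b_{i\ell}$ stays as is in the second; since $[-\ve b^t_{\ell k}d_k]_+$ and $[-\ve c^t_{\ell k}d_k]_+$ are integers, the factor $d_k^{-1}$ can be absorbed provided one checks $d_\ell[-\ve b^t_{\ell k}d_k]_+ d_k^{-1}=[-\ve d_\ell b^t_{\ell k}]_+\cdot(\text{something})$. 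Here one uses the standard identity $[c\,a]_+=c[a]_+$ for a positive integer $c$; applying it to $c=d_\ell$ and to $c=d_k^{-1}$ (more carefully, using $[a d_k]_+ = d_k [a]_+$ with $a=-\ve b^t_{\ell k}d_\ell$ after reorganizing) yields exactly the coefficients appearing in \eqref{eq:gmut3}. The $j\neq k$ case is immediate since both $g^{t'}_{ij}=g^t_{ij}$ and $\tilde{g}^{t'}_{ij}=\tilde{g}^t_{ij}$ hold trivially, and the initial condition $\tilde{g}^{t_0}_{ij}=d_i\delta_{ij}d_j^{-1}=\delta_{ij}$ is clear.

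Next I would record that the exchange matrices governing both recurrences agree: the mutation \eqref{eq:bmut1} of $B$ is, by the remark following Definition~\ref{defn:mut1}, the ordinary mutation of $DB$, so $d_\ell b^t_{\ell k}$ is precisely the $(\ell,k)$-entry of $(DB)^t$, the matrix at vertex $t$ in the ordinary cluster pattern with initial exchange matrix $DB$. Likewise $d_i b_{i\ell}$ is the $(i,\ell)$-entry of the initial matrix $DB$. Combined with Proposition~\ref{prop:c1}, which says the $c$-vectors $c^t_{\ell k}$ of the $(\bfd,\bfz)$-pattern coincide with those of the ordinary pattern with initial seed $(\bfx,\bfy,DB)$, every ingredient of \eqref{eq:gmut3} is an ingredient of the ordinary $g$-vector recurrence of \cite[Proposition 6.6]{Fomin07} for initial seed $(\bfx,\bfy,DB)$, with the same initial condition $\delta_{ij}$. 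Since a recurrence on the tree $\bbT_n$ with a fixed initial value has a unique solution, the $\tilde{g}$-vectors equal the $g$-vectors of that ordinary pattern, which is the assertion of Proposition~\ref{prop:gt1}.

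The step I expect to require the most care is the bookkeeping of the $[\,\cdot\,]_+$ coefficients when the diagonal factors $d_i$ are moved inside, precisely because $[\,\cdot\,]_+$ is not linear but only commutes with multiplication by \emph{positive} scalars; one must be sure that the factor $d_k^{-1}$ never ends up inside a $[\,\cdot\,]_+$ bracket by itself but is always paired off against a compensating $d_k$ coming from the definition $\tilde c^t=D^{-1}c^t D$ implicit through Proposition~\ref{prop:c1}, or, in the present formulation, that it simply cancels because \eqref{eq:gmut1} already carries an explicit $d_k$ inside each bracket. This is entirely parallel to the passage from \eqref{eq:cmut1} to \eqref{eq:ctmut1} carried out earlier for the $\tilde c$-vectors, so the same manipulation applies verbatim. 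Everything else is a routine transcription, and the uniqueness-of-solution argument on $\bbT_n$ is standard.
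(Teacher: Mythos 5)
Your proof is correct and follows essentially the same route as the paper: conjugate the recurrence \eqref{eq:gmut1} by $D$ to obtain \eqref{eq:gmut3} (using $[ca]_+=c[a]_+$ for positive $c$ and the explicit $d_k$ already inside each bracket), identify the result with the ordinary $g$-vector recurrence for initial seed $(\bfx,\bfy,DB)$ via Proposition~\ref{prop:c1} and the fact that \eqref{eq:bmut1} is the ordinary mutation of $DB$, and conclude by uniqueness of the solution on $\bbT_n$. The only quibble is your parenthetical appeal to $\tilde c^t=D^{-1}c^tD$ ``through Proposition~\ref{prop:c1}'' (that conjugation belongs to Proposition~\ref{prop:ct1}, and is not needed here since \eqref{eq:gmut3} involves $c^t_{\ell k}$ itself), but your alternative explanation---that the $d_k^{-1}$ simply cancels the $d_k$ carried inside each bracket of \eqref{eq:gmut1}---is the right one.
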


We see a duality between the $c$-vectors and the $g$-vectors
in Propositions \ref{prop:c1}, \ref{prop:ct1}, \ref{prop:g1}, and \ref{prop:gt1}.
In particular, the $c$-vectors are  associated with the matrix $DB$,
while the $g$-vectors are associated with the matrix $BD$.
This is somewhat suggested from the beginning in the monomial parts
in the relations \eqref{eq:ymut1} and \eqref{eq:xmut1}.

\subsubsection{Sign-coherence}

\begin{defn}
Let $\Sigma$ be a $(\bfd,\bfz)$-cluster pattern with principal coefficients.
A $c$-vector $c^t_j$ of $\Sigma$ is said to be {\em sign-coherent\/}
if it is nonzero and all components are either nonnegative or nonpositive.
\end{defn}

The following proposition is parallel to \cite[Proposition 5.6]{Fomin07}.

\begin{prop}[{cf.~\cite[Proposition 5.6]{Fomin07}}]
For any $(\bfd,\bfz)$-cluster pattern with principal coefficients,
the following two conditions are equivalent.
\begin{itemize}
\item[(i)] Any $F$-polynomial $F^t_i(\bfy,\bfz)$ has constant term 1.
\item[(ii)] Any $c$-vector $c^t_i$ is sign-coherent.
\end{itemize}
\end{prop}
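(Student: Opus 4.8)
The plan is to prove the equivalence (i) $\Leftrightarrow$ (ii) by a simultaneous induction on the distance between $t$ and $t_0$ in $\bbT_n$, exactly parallel to the argument for \cite[Proposition 5.6]{Fomin07}. More precisely, I would prove by induction the combined statement: for every $t$ at distance $\le m$ from $t_0$, every $c$-vector $c^t_i$ is sign-coherent \emph{and} every $F$-polynomial $F^t_i(\bfy,\bfz)$ has constant term $1$. The base case $m=0$ is trivial: $c^{t_0}_i=\bfe_i$ is sign-coherent and $F^{t_0}_i=1$.

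For the inductive step, suppose the statement holds at $t$ and let $t\,{\buildrel k\over -}\,t'$. The $c$-vectors and $F$-polynomials with index $j\neq k$ are unchanged, so only $j=k$ needs attention. First I would handle the constant term of $F^{t'}_k$. Look at the recurrence \eqref{eq:Fmut1}. Using sign-coherence of $c^t_k$, choose the sign $\ve$ so that $\ve c^t_{jk}\le 0$ for all $j$ (if $c^t_k$ has nonpositive entries take $\ve=1$, otherwise $\ve=-1$); then $[-\ve c^t_{jk}]_+=-\ve c^t_{jk}\ge 0$, so the monomial prefactor $\bigl(\prod_j y_j^{[-\ve c^t_{jk}]_+}F^t_j{}^{[-\ve b^t_{jk}]_+}\bigr)^{d_k}$ has constant term (i.e. value at $\bfy=\bfz=0$, equivalently the coefficient of the monomial $\prod y_j^0\prod z_{k,s}^0$)… — more carefully, I set all $y_j=0$ and all $z$'s aside: the only term in the sum $\sum_{s=0}^{d_k} z_{k,s}(\cdots)^s$ that survives specialization $\bfy=0$ is $s=0$ (since for $s\ge 1$ the factor $\prod_j y_j^{\ve c^t_{jk}}F^t_j{}^{\ve b^t_{jk}}$ with the chosen sign carries strictly positive $y$-powers once combined with the outer prefactor, using that at least one $c^t_{jk}\neq 0$), giving constant term $z_{k,0}=1$; combined with $F^t_k{}^{-1}$ having constant term $1$ by hypothesis, $F^{t'}_k$ has constant term $1$. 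This is the step that uses both halves of the inductive hypothesis: sign-coherence of $c^t_k$ to pick a good sign, and the constant-term hypothesis on the $F^t_j$. Conversely, for the direction (i)$\Rightarrow$(ii) within the induction I would argue contrapositively along the lines of \cite{Fomin07}: if $c^{t'}_k$ fails sign-coherence, the recurrence \eqref{eq:cmut1} would force cancellations in the tropical/monomial structure incompatible with $F^{t'}_k$ having a well-defined constant term $1$ — but the cleanest route is the following standard one.

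The cleanest organization, and the one I would actually write, uses the separation-of-variables formula that will be proven in the next section (cluster variables in terms of $c$-, $g$-vectors and $F$-polynomials): since $y^t_k=\prod_i y_i^{c^t_{ik}}$ is a Laurent monomial and the $F$-polynomials are honest polynomials in $\bfy,\bfz$, one shows that the constant term of $F^t_k$ equals the coefficient obtained by tropical evaluation, and sign-coherence of $c^t_k$ is precisely the condition that the tropical sum $\bigoplus_{s=0}^{d_k}(\prod_j y_j^{\ve c^t_{jk}})^s$ collapses to a single monomial, which by \eqref{eq:trop1} is what makes the $F$-polynomial recurrence produce a polynomial with constant term $1$. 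However, since this risks circularity, I would instead follow \cite{Fomin07} verbatim: prove by the simultaneous induction above that (i) and (ii) hold \emph{at every vertex}, thereby establishing their equivalence as a by-product (each is then unconditionally true, hence equivalent), \emph{provided} one already knows sign-coherence. But sign-coherence is not proven in this paper for general $\bbP$; so the honest statement to prove is the bare equivalence, vertex by vertex, via the mutual induction: assuming (i) holds at all vertices, deduce (ii) at all vertices, and vice versa.

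Concretely: \textbf{(ii) $\Rightarrow$ (i).} Induct on distance; at $t'$ with $t\,{\buildrel k\over -}\,t'$, sign-coherence of $c^t_k$ lets me choose $\ve$ with $[-\ve c^t_{jk}]_+=-\ve c^t_{jk}$, and then the computation sketched above shows $F^{t'}_k$ has constant term $1$, using inductively that each $F^t_j$ does. \textbf{(i) $\Rightarrow$ (ii).} Again induct on distance. Suppose every $F$-polynomial has constant term $1$; I want $c^{t'}_k$ sign-coherent. By Proposition \ref{prop:c1}, $c^{t'}_k$ is a $c$-vector of an \emph{ordinary} cluster pattern with principal coefficients and initial matrix $DB$, so it suffices to invoke the ordinary statement \cite[Proposition 5.6]{Fomin07}: for that ordinary pattern, condition (i) for \emph{its} $F$-polynomials is equivalent to sign-coherence of \emph{its} $c$-vectors. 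So I must relate the constant terms of the generalized $F$-polynomials $F^t_j(\bfy,\bfz)$ to those of the ordinary $F$-polynomials of the $DB$-pattern. The expected main obstacle is exactly this comparison: the generalized $F$-polynomial recurrence \eqref{eq:Fmut1} is \emph{not} the ordinary one (it has the degree-$d_k$ sum $\sum_s z_{k,s}(\cdots)^s$ rather than $1+(\cdots)$), so constant-term $1$ for generalized $F$'s is not literally the same condition as for ordinary $F$'s. I would resolve this by showing directly, by induction on distance using \eqref{eq:Fmut1} and the definition \eqref{eq:tsum1} of the tropical sum, that the constant term of $F^t_k$ is $1$ if and only if the tropical sum $\bigoplus_{s=0}^{d_k}(\prod_j y_j^{\ve c^t_{jk}})^s$ equals $\prod_j y_j^{[\ve c^t_{jk}]_+ d_k}$ times a unit — equivalently, iff $c^t_k$ is sign-coherent — which closes the induction without recourse to the ordinary proposition. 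This self-contained tropical argument, rather than the reduction to \cite{Fomin07}, is what I would write up, and getting the bookkeeping of the $z_{k,s}$-monomials right in \eqref{eq:Fmut1} under specialization $\bfy=0$ is the delicate point.
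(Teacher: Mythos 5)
The paper itself omits the details of this proof, saying only that it is parallel to the argument of Fomin--Zelevinsky via the recurrence \eqref{eq:Fmut1}, so your proposal must be judged on whether it would actually close. The direction (ii)$\Rightarrow$(i) is essentially right in outline, but you have a sign slip: with your choice of $\ve$ (so that $\ve c^t_{jk}\le 0$ for all $j$), the term of $\sum_{s=0}^{d_k}z_{k,s}\bigl(\prod_j y_j^{\ve c^t_{jk}}F^t_j{}^{\ve b^t_{jk}}\bigr)^s$ that survives at $\bfy=0$ after multiplying by the prefactor is $s=d_k$, not $s=0$: the combined $y_j$-exponent of the $s$-th term is $(d_k-s)(-\ve c^t_{jk})\ge 0$, which vanishes precisely at $s=d_k$. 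The conclusion is unaffected only because $z_{k,d_k}=1=z_{k,0}$, but the computation as written is incorrect.

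The direction (i)$\Rightarrow$(ii) has a genuine gap. Your committed plan rests on the claim that the constant term of the $F$-polynomial is $1$ iff the tropical sum $\bigoplus_{s=0}^{d_k}\bigl(\prod_j y_j^{\ve c^t_{jk}}\bigr)^s$ "collapses to a single monomial," equivalently iff $c^t_k$ is sign-coherent. This is false: by the definition \eqref{eq:tsum1} a tropical sum of Laurent monomials is \emph{always} a single Laurent monomial --- indeed \eqref{eq:trop1} evaluates it unconditionally as $\prod_j y_j^{-d_k[-\ve c^t_{jk}]_+}$ --- so it detects nothing about sign-coherence. The mechanism you actually need lives in the ordinary, non-tropical sum. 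Writing \eqref{eq:Fmut1} as
\begin{align*}
F^{t'}_k\,F^t_k=\sum_{s=0}^{d_k}z_{k,s}\prod_{j=1}^n y_j^{(d_k-s)[-\ve c^t_{jk}]_++s[\ve c^t_{jk}]_+}\prod_{j=1}^n F^t_j{}^{(d_k-s)[-\ve b^t_{jk}]_++s[\ve b^t_{jk}]_+},
\end{align*}
one sees that all exponents are nonnegative, that the terms with $0<s<d_k$ never contribute to the constant term because $z_{k,s}$ is a variable, and that the $s=0$ (resp.\ $s=d_k$) term contributes exactly $1$ iff $\ve c^t_k\ge 0$ (resp.\ $\ve c^t_k\le 0$), granted that each $F^t_j$ has constant term $1$. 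Hence the constant term of $F^{t'}_k$ equals $2$, $1$, or $0$ according as $c^t_k$ is zero, sign-coherent, or of mixed sign; this single trichotomy yields both implications (for (i)$\Rightarrow$(ii), constant term $1$ at $t'$ and $t$ forces the middle case). Without this count, or something equivalent, your induction for (i)$\Rightarrow$(ii) does not close, and your fallback reduction to the ordinary case via Proposition \ref{prop:c1} is, as you yourself note, blocked by the absence of a comparison between generalized and ordinary $F$-polynomials.
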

\begin{proof}
This is proved by a parallel argument to the one for
\cite[Proposition 5.6]{Fomin07} by using
the recursion relation \eqref{eq:Fmut1} for the $F$-polynomials.
We omit the detail.
\end{proof}

In the ordinary case it was conjectured in \cite[Conjecture 5.6]{Fomin07}
that the sign-coherence holds for any $c$-vector of any 
cluster pattern with principal coefficients.
This was proved by \cite[Theorem 1.7]{Derksen10}
when the initial exchange matrix $B$ is skew-symmetric,
and very recently it was proved in full generality by \cite[Corollary 5.5]{Gross14}.
Since our $c$-vectors are identified with the $c$-vectors of
some ordinary cluster pattern with principal coefficients
by Proposition \ref{prop:c1},
we obtain the following theorem as a corollary of
\cite[Corollary 5.5]{Gross14}.

\begin{thm}
Any $c$-vector of any
$(\bfd,\bfz)$-cluster pattern with principal coefficients is sign-coherent.
\end{thm}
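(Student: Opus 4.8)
The plan is to reduce this statement directly to the corresponding result in the ordinary cluster algebra setting, which the excerpt has already cited as known in full generality. The key structural input is Proposition~\ref{prop:c1}, which identifies the $c$-vectors of a $(\bfd,\bfz)$-cluster pattern with principal coefficients and initial seed $(\bfx,\bfy,B)$ with the $c$-vectors of the \emph{ordinary} cluster pattern with principal coefficients and initial seed $(\bfx,\bfy,DB)$. Since sign-coherence of a $c$-vector is a property depending only on the integer vector itself (being nonzero with all entries of one sign), this identification immediately transports the property back and forth between the two settings.

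First I would invoke Proposition~\ref{prop:c1} to fix, for the given $(\bfd,\bfz)$-cluster pattern with principal coefficients and initial exchange matrix $B$, the equality $c^t_j = \bar{c}^t_j$ for all $t$ and $j$, where $\bar{c}^t_j$ denotes the $j$-th $c$-vector of the ordinary cluster pattern with principal coefficients and initial exchange matrix $DB$. Here I should note that $DB$ is again skew-symmetrizable (indeed $B$ is skew-symmetrizable by the seed axioms, and multiplying on the left by the diagonal matrix $D$ changes the symmetrizer but not skew-symmetrizability), so the ordinary cluster pattern with principal coefficients and initial seed $(\bfx,\bfy,DB)$ is well-defined in the sense of \cite{Fomin07}. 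Then I would apply \cite[Corollary~5.5]{Gross14}, which asserts that every $c$-vector of every ordinary cluster pattern with principal coefficients is sign-coherent, to conclude that each $\bar{c}^t_j$, hence each $c^t_j$, is sign-coherent. This completes the argument.

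There is essentially no genuine obstacle here: the real work was done in establishing Proposition~\ref{prop:c1}, and the theorem is a one-line corollary. The only point requiring the slightest care is making sure the hypotheses of \cite[Corollary~5.5]{Gross14} are met --- namely that $DB$ is a legitimate skew-symmetrizable initial exchange matrix, so that the cited theorem genuinely applies --- and I would state this explicitly. Concretely, the proof reads as follows: by Proposition~\ref{prop:c1} the $c$-vectors of any $(\bfd,\bfz)$-cluster pattern with principal coefficients and initial seed $(\bfx,\bfy,B)$ coincide with the $c$-vectors of the ordinary cluster pattern with principal coefficients and initial seed $(\bfx,\bfy,DB)$; since $DB$ is skew-symmetrizable, \cite[Corollary~5.5]{Gross14} applies and guarantees that the latter $c$-vectors are all sign-coherent; hence so are the former. $\qed$
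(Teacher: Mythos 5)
Your proposal is correct and is precisely the paper's own argument: the theorem is deduced from Proposition~\ref{prop:c1} together with \cite[Corollary 5.5]{Gross14}, with the paper likewise noting elsewhere that $DB$ remains skew-symmetrizable. Nothing is missing.
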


As a consequence of the sign-coherence, 
we also obtain the following duality between the $C$- and $G$-matrices
by applying \cite[Eq.~(3.11)]{Nakanishi11a} (see also \cite[Proposition 3.2]{Nakanishi11c}),
which is valid under the sign-coherence property.
Recall that for a skew-symmetrizable matrix $B$
 the matrix $DB$ is still skew-symmetrizable.

\begin{prop}[{cf.~\cite[Eq.~(3.11)]{Nakanishi11a}}]
Let $C^t$ and $G^t$ be the $C$- and $G$-matrices at $t\in \bbT_n$
of
any
$(\bfd,\bfz)$-cluster pattern $\Sigma$ with principal coefficients.
Let $R=(r_i\delta_{ij})_{i,j=1}^n$ be a diagonal matrix with positive diagonal entries
such that $RDB$ is skew-symmetric.
Then, the following relation holds.
\begin{align}
R^{-1}D^{-1} (G^t)^T DRC^t = I.
\end{align}
\end{prop}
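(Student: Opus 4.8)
The plan is to reduce the statement to the known $C$–$G$ duality in the ordinary case and then track how the two rescalings by $D$ interact. By Proposition~\ref{prop:c1}, the $C$-matrix $C^t$ of the $(\bfd,\bfz)$-cluster pattern with initial seed $(\bfx,\bfy,B)$ equals the $C$-matrix of the ordinary cluster pattern with principal coefficients and initial seed $(\bfx,\bfy,DB)$. By Proposition~\ref{prop:gt1}, the matrix $\tilde{G}^t := D G^t D^{-1}$ (the $\tilde g$-vectors of \eqref{eq:gt1}) equals the $G$-matrix of that same ordinary cluster pattern with initial seed $(\bfx,\bfy,DB)$. So both the $C$- and the ($D$-conjugated) $G$-matrices of our generalized pattern are realized simultaneously as the $C$- and $G$-matrices of one ordinary cluster pattern, namely the one with initial exchange matrix $DB$.

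Now invoke the sign-coherence theorem just proved (Theorem immediately preceding the statement), which guarantees that this ordinary cluster pattern with initial matrix $DB$ has sign-coherent $c$-vectors, so \cite[Eq.~(3.11)]{Nakanishi11a} applies to it. The matrix $DB$ is skew-symmetrizable (as remarked just before the Proposition), and by hypothesis $R$ is chosen so that $R(DB)$ is skew-symmetric, i.e.\ $R$ is a skew-symmetrizer for $DB$. Therefore \cite[Eq.~(3.11)]{Nakanishi11a}, applied with exchange matrix $DB$ and skew-symmetrizer $R$, yields
\begin{align}
\label{eq:ordinary-duality}
R^{-1} (\tilde{G}^t)^T R\, C^t = I.
\end{align}
It remains only to substitute $\tilde{G}^t = D G^t D^{-1}$, so $(\tilde{G}^t)^T = D^{-1} (G^t)^T D$, and \eqref{eq:ordinary-duality} becomes
\begin{align}
R^{-1} D^{-1} (G^t)^T D R\, C^t = I,
\end{align}
which is exactly the claimed identity. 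One should double-check the precise normalization convention in \cite[Eq.~(3.11)]{Nakanishi11a} — whether the skew-symmetrizer there multiplies $B$ on the left or the right, and on which side $R$ appears in the duality formula — so that the roles of $R$, $D$, and transposition match; this bookkeeping is the only subtle point, but it is entirely mechanical once the convention is pinned down.

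The main obstacle, such as it is, is the consistency check that the hypothesis ``$RDB$ is skew-symmetric'' is the correct one to feed into the ordinary duality: in \cite{Nakanishi11a} the duality is stated for an exchange matrix together with a fixed skew-symmetrizer, and here that exchange matrix is $DB$ (not $B$ itself). One must confirm that $R$, defined by the condition on $RDB$, is indeed a left skew-symmetrizer of $DB$ in the sense used there, and that the ordinary formula then reads $R^{-1}(G^t)^TRC^t=I$ for that pattern before conjugating by $D$. Beyond this, the argument is a direct chain of the already-established Propositions~\ref{prop:c1} and~\ref{prop:gt1} together with the sign-coherence theorem, with no further computation required.
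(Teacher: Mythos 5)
Your proof is correct and follows essentially the same route as the paper, which simply combines \cite[Eq.~(3.11)]{Nakanishi11a} with Propositions \ref{prop:c1} and \ref{prop:gt1}; your identification $\tilde{G}^t = DG^tD^{-1}$ and the resulting conjugation $(\tilde{G}^t)^T = D^{-1}(G^t)^TD$ is exactly the intended bookkeeping. The only addition you make is the explicit appeal to sign-coherence, which the paper also notes (in the text preceding the proposition) as the hypothesis under which the ordinary duality formula applies.
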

\begin{proof}
This is obtained by combining \cite[Eq.~(3.11)]{Nakanishi11a}
with Propositions \ref{prop:c1} and \ref{prop:gt1}.
\end{proof}


\subsection{Main formulas}
Finally, we present the main formulas,
 which express the $x$- and $y$-variables
of any $(\bfd,\bfz)$-cluster pattern  $\Sigma$ in any semifield $\bbP$
in terms of $F$-polynomials, $c$-vectors, and $g$-vectors
defined for
 the same initial exchange matrix of $\Sigma$.

\begin{thm}[{cf.~\cite[Proposition 3.13]{Fomin07}}]
For any $(\bfd,\bfz)$-cluster pattern in $\bbP$,
the following formula holds.
\begin{align}
\label{eq:ycf1}
 y^t_i= \prod_{j=1}^n
y_j^{c^t_{ji}}
\prod_{j=1}^n F^t_j\vert_{\bbP}(\bfy,\bfz)^{{b}^t_{ji}}.
\end{align}
\end{thm}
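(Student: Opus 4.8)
The plan is to prove \eqref{eq:ycf1} by induction on the distance between $t$ and $t_0$ in $\bbT_n$, following the strategy of \cite[Proposition 3.13]{Fomin07}. The base case $t=t_0$ is immediate: the $C$-matrix is the identity and all $F$-polynomials are $1$, so the right-hand side is just $y_i$. For the inductive step, suppose $t {\buildrel k \over -} t'$ and that \eqref{eq:ycf1} holds at $t$; I want to deduce it at $t'$. The key observation is that formula \eqref{eq:ycf1}, which is stated for an arbitrary semifield $\bbP$, need only be checked in the universal semifield $\bbQ_{\mathrm{sf}}(\bfy,\bfz)$: by Lemma \ref{lem:yeval1} the $y$-variables of a general pattern are evaluations of the $Y$-functions, and $F$-polynomials together with the exponents $c^t_{ji},b^t_{ji}$ are intrinsic to the pattern with principal coefficients, so once the identity of subtraction-free rational expressions is established universally, it descends to every $\bbP$ by the universal property. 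Thus I may work entirely with the $Y$-functions and the recurrences \eqref{eq:ymut1}, \eqref{eq:cmut1}, \eqref{eq:Fmut1}.

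The heart of the argument is then a direct computation. For $i=k$, the mutation \eqref{eq:ymut1} gives $y^{t'}_k=(y^t_k)^{-1}$; on the right-hand side of the claimed formula at $t'$ we must plug in $c^{t'}_{jk}=-c^t_{jk}$ and $b^{t'}_{jk}=-b^t_{jk}$ (the latter from \eqref{eq:bmut1}), together with $F^{t'}_j=F^t_j$ for $j\neq k$ and the recursion \eqref{eq:Fmut1} for $F^{t'}_k$. After substitution, the $F^{t'}_k$ factor appears with exponent $b^{t'}_{kk}=0$, so it drops out entirely, and what remains is precisely $\bigl(\prod_j y_j^{c^t_{jk}}\prod_j F^t_j{}^{b^t_{jk}}\bigr)^{-1}$, which equals $(y^t_k)^{-1}$ by the induction hypothesis. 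For $i\neq k$, I substitute $c^{t'}_{ji}$ and $b^{t'}_{ji}$ from \eqref{eq:cmut1} and \eqref{eq:bmut1} into the right-hand side at $t'$; the $b^t_{jk}$- and $c^t_{jk}$-dependent correction terms in these mutation rules, combined with the $F^{t'}_k$-recursion \eqref{eq:Fmut1}, should reorganize into exactly the three factors on the right-hand side of \eqref{eq:ymut1}: the unchanged part $y^t_i\prod_j F^t_j{}^{b^t_{ji}}$ (the induction hypothesis), the monomial $(y^t_k)^{d_k[\ve b^t_{ki}]_+}$, and the factor $\bigl(\sum_{s=0}^{d_k} z_{k,s}(y^t_k)^{\ve s}\bigr)^{-b^t_{ki}}$ — here one recognizes $\sum_s z_{k,s}(\text{monomial}\cdot F)^s$ as precisely $F^{t'}_k$ up to the normalizing monomial, using the identification $\hat y^t_k = y^t_k$-monomial $\times$ $F$-part that underlies the passage between \eqref{eq:xmut1} and \eqref{eq:Fmut1}.

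The main obstacle I anticipate is bookkeeping with the $[\,\cdot\,]_+$ and the sign $\ve$: the mutation rules \eqref{eq:ymut1}, \eqref{eq:cmut1}, \eqref{eq:Fmut1} each split according to signs of $b^t_{ki}$, $c^t_{jk}$, etc., and matching the piecewise expressions requires the standard identity $[a]_+ - [-a]_+ = a$ and the tropical identity \eqref{eq:trop1} in order to convert between the "numerator" exponents $[\ve b^t_{ki}]_+$ and the raw exponents $b^t_{ki}$. This is the same juggling carried out in \cite{Fomin07}; since by the above reduction the whole identity lives in the universal semifield, no positivity or Laurent-phenomenon input is needed beyond what is already in hand. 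I would organize the calculation so that the $\ve$-independence, already established for each of the three recurrences individually, lets me fix $\ve=1$ at the outset, and then verify the two cases $b^t_{ki}\ge 0$ and $b^t_{ki}\le 0$ separately, checking that in each the product of the three right-hand factors telescopes to the right-hand side of \eqref{eq:ymut1}. Once $i=k$ and $i\neq k$ are both done, the induction closes and the theorem follows.
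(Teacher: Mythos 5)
Your proposal is correct, but it takes a genuinely different route from the paper. The paper avoids induction entirely: it applies Lemma \ref{lem:yeval1} to the $\hat{y}$-variables (which mutate like $y$-variables by the proposition containing \eqref{eq:yhatmut1}) to get $\hat{y}^t_i = Y^t_i(\hat{\bfy},\bfz)$, computes $\hat{y}^t_i$ a second time directly from the definition \eqref{eq:yhat1} in the principal-coefficients pattern as $\prod_j y_j^{c^t_{ji}}\prod_j X^t_j{}^{b^t_{ji}}$, equates the two, and then specializes $x_1=\cdots=x_n=1$ to turn $X^t_j$ into $F^t_j$ and $\hat{\bfy}$ into $\bfy$, yielding \eqref{eq:ycf2} in one stroke before evaluating in $\bbP$. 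You instead prove \eqref{eq:ycf2} by induction on the tree, feeding the three recurrences \eqref{eq:ymut1}, \eqref{eq:cmut1}, \eqref{eq:Fmut1} into one another; I checked that your bookkeeping does close (for $i\neq k$ the exponent of $y_j$ on your right-hand side is $c^{t'}_{ji}-d_kb^t_{ki}[-\ve c^t_{jk}]_+ = c^t_{ji}+c^t_{jk}d_k[\ve b^t_{ki}]_+$, the exponent of $F^t_j$ is $b^{t'}_{ji}-d_kb^t_{ki}[-\ve b^t_{jk}]_+ = b^t_{ji}+b^t_{jk}d_k[\ve b^t_{ki}]_+$, and the sum $\sum_s z_{k,s}(Y^t_k)^{\ve s}$ emerges from the $F^{t'}_k$ factor raised to $-b^t_{ki}$, exactly matching \eqref{eq:ymut1}; the case $i=k$ is immediate since $b^{t'}_{kk}=0$ kills the $F^{t'}_k$ factor). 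Your reduction to the universal semifield is the same final step the paper uses. What the paper's argument buys is brevity and no sign case analysis; what yours buys is independence from the $\hat{y}$-mutation proposition and an explicit demonstration that the three recurrences are mutually consistent — at the cost of the computational juggling you correctly anticipate. To be complete you would still need to write out that computation rather than assert it "should reorganize," but the plan is sound.
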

\begin{proof} The derivation is parallel to \cite[Proposition 3.13]{Fomin07}.
We apply Lemma \ref{lem:yeval1} to a $(\bfd,\bfz)$-cluster pattern with principal coefficients, and we obtain
\begin{align}
 \hat{y}^t_i = Y^t_i(\hat{\bfy},\bfz).
\end{align}
On the other hand, specializing  \eqref{eq:yhat1} to the
same $(\bfd,\bfz)$-cluster pattern with principal coefficients, we
have
\begin{align}
\begin{split}
 \hat{y}^t_i &= Y^t_i \vert_{\mathrm{Trop}(\bfy,\bfz)}(\bfy,\bfz)\prod_{j=1}^n X^t_j(\bfx,\bfy,\bfz)^{{b}^t_{ji}} 
\\
 &=\prod_{j=1}^n y_j^{c^t_{ji}}\prod_{j=1}^n X^t_j(\bfx,\bfy,\bfz)^{{b}^t_{ji}} 
,
\end{split}
\end{align}
where we used \eqref{eq:yc1} in the second equality.
Thus, we have
\begin{align}
 Y^t_i(\hat{\bfy},\bfz)=\prod_{j=1}^n y_j^{c^t_{ji}}
\prod_{j=1}^n X^t_j(\bfx,\bfy,\bfz)^{{b}^t_{ji}}.
\end{align}
Now, we set $x_1=\dots=x_n=1$. Then, $\hat{\bfy}=\bfy$, and
we obtain
\begin{align}
\label{eq:ycf2}
 Y^t_i (\bfy,\bfz)= \prod_{j=1}^n
y_j^{c^t_{ji}}
\prod_{j=1}^n F^t_j(\bfy,\bfz)^{{b}^t_{ji}}.
\end{align}
Finally, evaluating it in $\bbP$, we obtain \eqref{eq:ycf1}.
\end{proof}

\begin{thm}[{cf.~\cite[Corollary 6.3]{Fomin07}}]
 For any $(\bfd,\bfz)$-cluster pattern in $\bbP$, the following formula holds.
 \begin{align}
 \label{eq:xf1}
x^t_i  & =
\Biggl(\prod_{j=1}^n
x_j^{g^t_{ji}}\Biggr)
\frac{
F^t_i\vert_{\calF}(\hat{\bfy},\bfz)
}
{
F^t_i\vert_{\bbP}({\bfy},\bfz)
}.
\end{align}
\end{thm}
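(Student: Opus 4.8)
The plan is to follow the structure of the proof of the $y$-variable formula \eqref{eq:ycf1}, which has just been established, together with the argument of \cite[Corollary 6.3]{Fomin07}. The key observation is that the formula \eqref{eq:xf1} is ``universal'' in the same sense: it suffices to prove the corresponding identity for the $(\bfd,\bfz)$-cluster pattern with principal coefficients, and then specialize/evaluate to obtain the statement for an arbitrary semifield $\bbP$. So first I would fix the $(\bfd,\bfz)$-cluster pattern with principal coefficients and initial seed $(\bfx,\bfy,B)$, where now $\bfx$ is treated as a genuine tuple of algebraically independent variables, and prove the identity of $X$-functions
\begin{align}
\label{eq:xf-pr}
X^t_i(\bfx,\bfy,\bfz) =
\Biggl(\prod_{j=1}^n x_j^{g^t_{ji}}\Biggr)
\frac{F^t_i(\hat{\bfy},\bfz)}{F^t_i\vert_{\mathrm{Trop}(\bfy,\bfz)}(\bfy,\bfz)},
\end{align}
where $\hat{\bfy}=(\hat y_1,\dots,\hat y_n)$ is given by \eqref{eq:yhat1} and $F^t_i(\hat{\bfy},\bfz)$ denotes the substitution $y_j\mapsto \hat y_j$ into the polynomial $F^t_i(\bfy,\bfz)\in\mathbb{Z}[\bfy,\bfz]$.

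To prove \eqref{eq:xf-pr} I would proceed by induction on the distance from $t_0$ in $\bbT_n$, using the mutation formula \eqref{eq:Xmut1} (equivalently \eqref{eq:Xmut2}) for the $X$-functions, the mutation formula \eqref{eq:Fmut1} for the $F$-polynomials, and the $c$- and $g$-vector recurrences \eqref{eq:cmut1}, \eqref{eq:gmut1}. The base case $t=t_0$ is immediate since $X^{t_0}_i=x_i$, $F^{t_0}_i=1$, and $g^{t_0}_{ji}=\delta_{ji}$. For the inductive step at $t\overset{k}{-}t'$, the entries with $i\neq k$ are unchanged on both sides, so only $i=k$ needs checking. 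Here I would substitute the inductive hypothesis \eqref{eq:xf-pr} for $X^t_j$ ($j=1,\dots,n$) into the right-hand side of \eqref{eq:Xmut1}, and also substitute the ``hatted'' version of \eqref{eq:Fmut1}: namely, since $\deg(\hat y_i)=0$, applying the grading shows that evaluating \eqref{eq:Fmut1} at $\bfy\mapsto\hat{\bfy}$ produces exactly the factor $F^t_i\vert_{\calF}(\hat{\bfy},\bfz)$ appearing after multiplying through by the monomial $\prod_j x_j^{g^t_{jk}}$-type corrections. The monomial bookkeeping then reduces to two scalar identities: first, that the exponents of $x_j$ collected on the right-hand side match $g^{t'}_{jk}$ as given by \eqref{eq:gmut1} — this is precisely the homogeneity computation already used in the proof of the $g$-vector recurrence; and second, that the numerator/denominator ratio rearranges so that the numerator becomes $F^{t'}_k(\hat{\bfy},\bfz)$ and the denominator becomes $F^{t'}_k\vert_{\mathrm{Trop}(\bfy,\bfz)}(\bfy,\bfz)$, which follows from \eqref{eq:Fmut1} together with the tropical identity \eqref{eq:trop1} and the $c$-vector relation \eqref{eq:cmut1}. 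The sign $\ve$ can be fixed to $1$ throughout.

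Once \eqref{eq:xf-pr} is established, I would finish exactly as in the proof of \eqref{eq:ycf1}: the $X$-functions are universal in the sense that for any $(\bfd,\bfz)$-cluster pattern in any $\bbP$ with the same initial exchange matrix $B$, one has $x^t_i = X^t_i(\bfx,\bfy,\bfz)$ with $\bfx$ the initial cluster, $\bfy$ the initial coefficients evaluated in $\bbP$, and $\bfz$ the frozen coefficients in $\bbP$ — this is the content of the Laurent phenomenon (Theorem \ref{thm:Laurent1}) combined with the fact that the $X$-functions are defined by the same recurrence \eqref{eq:xmut1}. Then $\hat y^t_i$ of the general pattern equals $Y^t_i(\hat{\bfy},\bfz)$ by Lemma \ref{lem:yeval1}, so substituting $\bfy\mapsto\hat{\bfy}$ into \eqref{eq:xf-pr} and noting that $F^t_i\vert_{\mathrm{Trop}(\bfy,\bfz)}(\bfy,\bfz)$ evaluated in $\bbP$ is $F^t_i\vert_{\bbP}(\bfy,\bfz)$ yields \eqref{eq:xf1}.

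I expect the main obstacle to be the inductive step of \eqref{eq:xf-pr}, specifically the precise matching of the $x_j$-exponents with $g^{t'}_{jk}$: the term $-\sum_\ell b_{i\ell}[-\ve c^t_{\ell k}]_+$ in \eqref{eq:gmut1} must be produced by the tropical denominator $\bigoplus_{s=0}^{d_k}(\prod_j y_j^{\ve c^t_{jk}})^s$ together with the shift of the $\hat y_j$ into $y_j\prod_j x_j^{b_{ji}}$ hidden inside $F^t_k(\hat{\bfy},\bfz)$. Keeping track of which occurrences of $y_j$ carry the extra $\prod x$-monomial (the ``hatted'' ones, contributing to the numerator) versus which are tropically evaluated (contributing to $g$-vectors and the denominator) is the delicate point; this is exactly the bookkeeping carried out in \cite{Fomin07} and the degree factor $d_k$ propagates through uniformly because \eqref{eq:Xmut1} is built from the $d_k$-th power of the relevant monomials. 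Everything else is routine once this is set up correctly.
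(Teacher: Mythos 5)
Your first step---establishing the principal-coefficient identity $X^t_i(\bfx,\bfy,\bfz)=\bigl(\prod_j x_j^{g^t_{ji}}\bigr)F^t_i(\hat{\bfy},\bfz)/F^t_i\vert_{\mathrm{Trop}(\bfy,\bfz)}(\bfy,\bfz)$---is sound, although the induction you outline is more work than necessary: once homogeneity of the $X$-functions (Proposition \ref{prop:gvec1}) is known, the identity follows in one line by rescaling $x_i\mapsto\gamma_i x_i$, $y_i\mapsto \prod_k\gamma_k^{-b_{ki}}y_i$ and setting $\gamma_i=x_i^{-1}$, which turns $X^t_i(\bfx,\bfy,\bfz)$ into $F^t_i(\hat{\bfy},\bfz)$ up to the monomial $\prod_j x_j^{-g^t_{ji}}$; this scaling argument is exactly what the paper uses.

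The genuine gap is in your final paragraph. It is not true that for a $(\bfd,\bfz)$-cluster pattern in an arbitrary semifield $\bbP$ one has $x^t_i=X^t_i(\bfx,\bfy,\bfz)$: the $X$-functions are generated by the recurrence \eqref{eq:Xmut1}, whose denominator is the \emph{tropical} sum $\bigoplus_{s}\bigl(\prod_j y_j^{\ve c^t_{jk}}\bigr)^s$ computed in $\mathrm{Trop}(\bfy,\bfz)$, whereas the mutation \eqref{eq:xmut1} in $\bbP$ carries the denominator $\bigoplus_s z_{k,s}(y^t_k)^{\ve s}$ computed with the addition of $\bbP$. These do not agree, and the discrepancy is precisely what produces the factor $F^t_i\vert_{\bbP}(\bfy,\bfz)$ in \eqref{eq:xf1}. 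The correct bridge is the separation formula $x^t_i=X^t_i\vert_{\calF}(\bfx,\bfy,\bfz)/F^t_i\vert_{\bbP}(\bfy,\bfz)$ (equation \eqref{eq:xf2} in the paper, the analogue of \cite[Theorem 3.7]{Fomin07}), which needs its own induction tracking the normalization denominators through the mutations; your proposal never establishes it. Relatedly, the assertion that ``$F^t_i\vert_{\mathrm{Trop}(\bfy,\bfz)}(\bfy,\bfz)$ evaluated in $\bbP$ is $F^t_i\vert_{\bbP}(\bfy,\bfz)$'' is false: the tropical evaluation is a single Laurent monomial (indeed equal to $1$ once sign-coherence holds), and re-evaluating a monomial in $\bbP$ cannot recover the full sum $F^t_i\vert_{\bbP}(\bfy,\bfz)$. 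The analogy with the proof of \eqref{eq:ycf1} breaks down here because the $Y$-functions live in the universal semifield $\bbQ_{\mathrm{sf}}(\bfy,\bfz)$, where evaluation in any $\bbP$ is a semifield homomorphism, while the $X$-functions enjoy no such universal property with respect to $\bbP$.
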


\begin{proof}
The derivation is  parallel to \cite[Corollary 6.3]{Fomin07}.
First, we obtain the following equality
 exactly in
 the same way as 
\cite[Theorem 3.7]{Fomin07}, and we skip its derivation:
\begin{align}
 \label{eq:xf2}
x^t_i &=
\frac{
X^t_i\vert_{\calF}(\bfx,{\bfy},\bfz)
}
{
F^t_i\vert_{\bbP}({\bfy},\bfz)
}.
\end{align}
On the other hand, by the definition of the $g$-vectors, we have
\begin{align}
X^{t}_i (\dots,\gamma_ix_i,\dots;
\dots, \prod_{j=1}^n \gamma_k ^{-{b}_{ki}}y_i, \dots;
\dots, z_{i,r},\dots)
= \Biggl(\prod_{j=1} \gamma_j^{g^{t}_{ji}}\Biggr)
X^t_i(\bfx,\bfy,\bfz).
\end{align}
By setting $\gamma_i = x_i^{-1}$, we have
\begin{align}
F^{t}_i (\hat{\bfy},\bfz)
= \Biggl(\prod_{j=1} x_j^{-g^{t}_{ji}}\Biggr)
X^t_i(\bfx,\bfy,\bfz).
\end{align}
Combining it with \eqref{eq:xf2}, we obtain \eqref{eq:xf1}.
\end{proof}

\subsection{Example} Let us consider the example in
Section \ref{subsec:ex1} again.
From the data in Table \ref{tab:data1}, one can read off the following data
for the $C$-matrix $C(t)$, the $G$-matrix $G(t)$
and the $F$-polynomials $F_i(t)$ for the seed $\Sigma(t)$ with
principal coefficients therein.
\begin{alignat}{3}
C(1)&=\begin{pmatrix}
1 & 0 \\
0 & 1\\
\end{pmatrix},
\quad
&
G(1)&=\begin{pmatrix}
1 & 0 \\
0 & 1\\
\end{pmatrix},
\quad
&&
\begin{cases}
F_1(1)=1\\
F_2(1) = 1,
\end{cases}
\\
C(2)&=\begin{pmatrix}
-1 & 0 \\
0 & 1\\
\end{pmatrix},
\quad
&
G(2)&=\begin{pmatrix}
-1 & 0 \\
0 & 1\\
\end{pmatrix},
\quad
&&
\begin{cases}
F_1(2)=1+ zy_1 + y_1^2\\
F_2(2) = 1,
\end{cases}
\notag
\\
C(3)&=\begin{pmatrix}
-1 & 0 \\
0 & -1\\
\end{pmatrix},
\quad
&
G(3)&=\begin{pmatrix}
-1 & 0 \\
0 & -1\\
\end{pmatrix},
\quad
&&
\begin{cases}
F_1(3)=1+ zy_1 + y_1^2\\
F_2(3) = 1+y_2+zy_1y_2+y_1^2y_2,
\end{cases}
\notag
\\
C(4)&=\begin{pmatrix}
1 & -2 \\
0 & -1\\
\end{pmatrix},
\quad
&
G(4)&=\begin{pmatrix}
1 & 0 \\
-2 & -1\\
\end{pmatrix},
\quad
&&
\begin{cases}
F_1(4)=1+ 2y_2 + y_2^2\\
\qquad\qquad+zy_1y_2+zy_1y_2^2+y_1^2y_2^2\\
F_2(4) = 1+y_2+zy_1y_2+y_1^2y_2,
\end{cases}
\notag
\\
C(5)&=\begin{pmatrix}
-1 & 2 \\
-1 & 1\\
\end{pmatrix},
\quad
&
G(5)&=\begin{pmatrix}
1 & 1 \\
-2 & -1\\
\end{pmatrix},
\quad
&&
\begin{cases}
F_1(5)=1+ 2y_2 + y_2^2\\
\qquad\qquad+zy_1y_2+zy_1y_2^2+y_1^2y_2^2\\
F_2(5) = 1+y_2,
\end{cases}
\notag
\\
C(6)&=\begin{pmatrix}
1 & 0 \\
1 & -1\\
\end{pmatrix},
\quad
&
G(6)&=\begin{pmatrix}
1 & 1 \\
0 & -1\\
\end{pmatrix},
\quad
&&
\begin{cases}
F_1(6)=1\\
F_2(6) = 1+y_2,
\end{cases}
\notag
\\
C(7)&=\begin{pmatrix}
1 & 0 \\
0 & 1\\
\end{pmatrix},
\quad
&
G(7)&=\begin{pmatrix}
1 & 0 \\
0 & 1\\
\end{pmatrix},
\quad
&&
\begin{cases}
F_1(7)=1\\
F_2(7) = 1.
\end{cases}
\notag
\end{alignat}

%
%
%
%
%
%


\bibliography{../../biblist/biblist.bib}
\end{document}